\documentclass[final]{siamltex}
\usepackage{amsfonts}
\usepackage{calrsfs}
\usepackage{makeidx}
\usepackage{amsmath}
\usepackage{amssymb}
\usepackage{bm}
\usepackage{cite}
\usepackage{showkeys}
\usepackage[english]{babel}
\usepackage{dblaccnt}
\usepackage{accents}
\usepackage{graphicx}
\usepackage{psfrag}
\usepackage{pstool}
\usepackage{subfig}
\usepackage{color}
\usepackage{amscd}
\usepackage[mathscr]{euscript}
\newcommand{\N}{\ensuremath{\mathbb{N}}}

\renewcommand{\phi}{\varphi}
\renewcommand{\d}{\mathrm{d}}

\newcommand{\ol}{\overline}

\renewcommand{\Re}{\mathrm{Re}\,}
\renewcommand{\Im}{\mathrm{Im}\,}

\newcommand{\om}{\omega}
\newcommand{\eps}{\varepsilon}

\newcommand{\vbJ}{{\vec{\bf J}}}

\newcommand{\curl}{{\vec{\nabla}\times}}
\newcommand{\divv}{{\vec{\nabla}\cdot}}
\renewcommand{\div}{\mathrm{div}}
\newcommand{\curll}{\mathrm{curl}}

\newcommand{\pa}{\partial}

\newcommand{\vbx}{\vec{\bm{x}}}
\newcommand{\vby}{\vec{\bm{y}}}

\newcommand{\vx}{\vbx}
\newcommand{\bx}{\bm{x}}
\newcommand{\vbn}{\vec{\bm{n}}}

\newcommand{\vbu}{\vec{\bm{u}}}
\newcommand{\vbg}{\vec{\bm{g}}}
\newcommand{\vbf}{\vec{\bm{f}}}

\newcommand{\vbv}{\vec{\bm{v}}}
\newcommand{\vbphi}{\vec{\bm{\varphi}}}

\newcommand{\cF}{{\mathcal F}}

\newcommand{\lin}{\big<}
\newcommand{\rin}{\big>}
\newcommand{\bV}{\boldsymbol{\mathnormal{V}}}
\newcommand{\bF}{{\bf F}}
\newcommand{\bd}{{\bf d}}
\newcommand{\cL}{{\mathcal L}}
\newcommand{\icu}{{\mbox{\tiny curl}}}
\newcommand{\idiv}{{\mbox{\tiny div}}}

\begin{document}

\sloppy

\title{Imaging with electromagnetic waves in terminating waveguides}
\author{Liliana Borcea
  \and Dinh-Liem Nguyen\thanks{Department of Mathematics, University
    of Michigan, Ann Arbor, MI, 48109, USA;
    \texttt{borcea@umich.edu and dlnguyen@umich.edu}} }

\maketitle

\begin{abstract}
  We study an inverse scattering problem for Maxwell's equations in
  terminating waveguides, where localized reflectors are to be imaged
  using a remote array of sensors. The array probes the waveguide with
  waves and measures the scattered returns.  The mathematical
  formulation of the inverse scattering problem is based on the
  electromagnetic Lippmann-Schwinger integral equation and an explicit
  calculation of the Green tensor.  The image formation is carried
  with reverse time migration and with $\ell_1$ optimization.
\end{abstract}

\begin{keywords}
electromagnetic, terminating waveguide, inverse scattering.
\end{keywords}

\section{Introduction}
We consider an inverse scattering problem for Maxwell's equations in a
waveguide which contains a few unknown reflectors. The setup is
illustrated in Figure \ref{fig:setup}, where an array of sensors
probes the waveguide with waves and records the returns over the
duration of some time window. The inverse problem is to reconstruct
the reflectors from these measurements.

To carry out explicit calculations we assume that the waveguide has a
simple geometry, with rectangular cross-section $\Omega =
(0,L_1)\times(0,L_2)$, and introduce the system of coordinates $\vx =
(\bx,x_3)$, with $\bx = (x_1,x_2) \in \Omega$ and $x_3 \le 0$. The
waveguide terminates at $x_3 = 0$ and we denote its domain by
\[
W = (0,L_1)\times(0,L_2)\times(-\infty,0), 
\]
with boundary $\pa W$. For convenience we model the boundaries as
perfectly conducting, but other boundary conditions may be used.

The electric field $\vec{\bm E}(\om,\vx)$, decomposed over frequencies $\om$,
satisfies the equation
\begin{align}
\curl \curl \vec{\bm E}(\om,\vbx) - \om^2 \mu_o \eps(\om,\vbx) \vec{\bm E}(\om,\vbx)
= i \om \mu_o \vbJ(\om, \bm x) \delta(x_3 + L), \label{eq:Maxwell1}
\qquad \vx \in W,
\end{align}
with boundary conditions
\begin{align}
\label{eq:boundaryCond}
 \vbn(\vx)\times\vec{\bm E}(\omega,\vbx) = 0  \quad \text{on } \pa W,
\end{align}
where $\curl$ is the curl operator in $\mathbb{R}^3$ and $\vbn(\vx)$
is the unit outer normal at $\pa W$. There is also a radiation
condition at $x_3 \to -\infty$, which states that $\vec{\bm E}(\om,\vx)$ is
bounded and outgoing.  The current source density $\vbJ$ models the
excitation from the array located at distance $L$ from the
terminating boundary at $x_3 = 0$.

The waveguide is filled with a linear and isotropic homogeneous medium
with electric permittivity $\eps_o$ and magnetic permeability $\mu_o$,
and a few reflectors supported in the compact domain $D \subset W$,
located between the array and the terminating boundary. The reflectors
are modeled as linear and possibly anisotropic dielectrics with
Hermitian, positive definite relative electric permitivity matrix
$\eps_r(\om,\vbx)$.  The term $\eps \vec{\bm E}$ in (\ref{eq:Maxwell1}) is
the electric displacement, and $\eps$ is the electric permittivity
tensor satisfying
\begin{equation}
  \eps(\om,\vx) = \eps_o \Big[ 1_{_D}(\vx) \Big(\eps_r(\om,\vx) - I) + 
      I \Big].
  \label{eq:3}
\end{equation}
Here $I$ is the $3\times 3$ identity matrix and $1_{_D}(\vx)$ is the
indicator function, equal to one for $\vx \in D$ and zero otherwise.

\begin{figure}[t!]
\centering
   \psfrag{x1}{$x_1$}
   \psfrag{x2}{$x_2$}
   \psfrag{x3}{$x_3$}
   \psfrag{D}{$D$}
   \psfrag{L1}{$L_1$}
   \psfrag{L2}{$L_2$}
   \psfrag{L}{$L$}
    \psfrag{O}{$O$}
   \includegraphics[width=12cm]{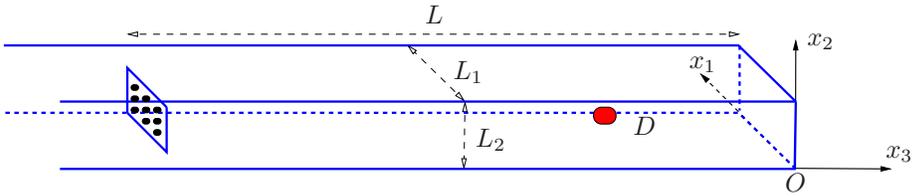}
   \caption{Schematic of the imaging setup in a terminating waveguide
     with rectangular cross-section. The unknown reflector is
     supported in $D$. The array of sensors is far away from it, at
     distance $L$ from the terminating boundary. }    
   \label{fig:setup}
\end{figure}

The inverse problem is to reconstruct the perturbation $\eps_r-I$ in
(\ref{eq:3}), or at least its support $D$, from measurements of the
electric field $\vec{\bm E}(\om,\vbx)$ at points $\vbx =(\bx,-L)$ in the
array aperture $A$, a subset of $\Omega$.

Inverse scattering and inverse source problems in waveguides  have been
considered in the past in various setups relevant to applications in ocean acoustics,
non-destructive evaluation and imaging  in tunnels.
We refer to
\cite{Dediu2006, Bourg2008,
  Bourg2011, Bourg2012, Bourg2013, Monk2012, Arens2011, Xu2000,
  Roux2000,tsogka2013selective} for mathematical studies of inverse scattering problems in acoustic and elastic
waveguides with straight walls, and filled with homogeneous media. Random acoustic
waveguides with finite cross-section are considered in \cite{Borce2010, Issa2010}, and
with  unbounded cross-section, as encountered in ocean acoustics, in  \cite{borcea2014paraxial,sabra2004blind}.
Examples of inverse scattering problems in planar electromagnetic waveguides are in
\cite{tamil1991spectral,Mills1992, Jorda1996},
where  the problem  is reduced to one for the  scalar Helmholtz equation by considering a single type of waves, transverse electric or magnetic.

In this paper we give the mathematical formulation of the
electromagnetic scattering problem in terminating waveguides and study
with numerical simulations two imaging methods. The first is a
reverse time migration approach, where the wave field measured at the
array is time reversed and propagated to the imaging region using the
electromagnetic Green's tensor in the unperturbed waveguide. The
second method uses $\ell_1$ optimization, and is motivated by the
assumption that the perturbation of the electric permittivity has
small spatial support $D$.

The paper is organized as follows: We begin in section \ref{sect:FP}
with the formulation of the forward problem. We define the scattered
electric field and show that it satisfies a Lipmann-Schwinger type
equation. The solvability of this equation is analyzed using the
Fredholm alternative. The data model used for inversion is given in
section \ref{sect:FP4} and the imaging methods are formulated in
section \ref{sect:imag}. The imaging results obtained with numerical
simulations are in section \ref{sect:num}. We end with a summary in
section \ref{sect:sum}.

\section{The scattering problem}
\label{sect:FP}
In this section we formulate the scattering problem. We begin in section
\ref{sect:FP1} with the expression of the electric field in the
unperturbed (homogeneous) waveguide. Then we define in section
\ref{sect:FP2} the scattered wave field at the unknown reflectors and
derive the radiation condition at $x_3 \to -\infty$. We state the
scattering problem as a Lippmann-Schwinger integral equation and prove
its Fredholm property in section \ref{sect:FP3}. 
\subsection{The homogeneous waveguide}
\label{sect:FP1}
In the absence of any reflector in the waveguide the electric field is
denoted by $\vec{\bm E}^o$, and solves the boundary value problem 
\begin{align}
\nonumber \curl\curl \vec{\bm{E}}^{o}(\vx) - k^2 \vec{\bm{E}}^{o}(\vx)
&= i\omega\mu_o\vbJ(\bm x)\delta(x_3+L) \qquad \vx = (\bx,x_3)\in W,
\\
\label{eq:forward4}
\vbn(\vbx) \times\vec{\bm{E}}^{o}(\vx) &= 0  \qquad \vx \in \pa W, 
\end{align}
where $k = \om \sqrt{\eps_o \mu_o}$ is the wavenumber. Obviously,
$\vec{\bm E}^o$ and $\vbJ$ depend on the frequency $\om$, but since we
consider a constant $\om$ we simplify notation and drop it henceforth
from the arguments of all fields. The expression of the electric field
in infinite homogeneous waveguides is well known. See for example
\cite[chapter 8]{jackson}. It is a superposition of a countable set of transverse
electric and magnetic waves, called modes, which are either
propagating away from the source or are decaying. In the terminating
waveguide we have a similar mode decomposition of $\vec{\bm E}^o$, as stated
in Lemma \ref{lem.1}, but there are both outgoing (forward
propagating) and incoming (backward propagating) waves due to the
reflection at the terminating boundary at $x_3 = 0$, and the
evanescent waves may be growing or decaying away from the source, in
the interval  $x_3  \in (-L,0)$.

The mode decomposition in Lemma \ref{lem.1} is obtained by expanding
at each $x_3$ the field $\vec{\bm E}^o(\vx)$ in the eigenfunctions $\vec
\Phi_n^{(s)}(\bx)$ of the vectorial Laplacian
\begin{align}
-\Delta \vec \Phi_n^{(s)}(\bx) &= \lambda_n \vec \Phi_n^{(s)}(\bx) \quad \bx \in  \Omega,
\nonumber \\ \bm{n}^\perp(\bx)\cdot \Phi_n^{(s)}(\bx) &= \Phi_{n,3}(\bx) = 0\quad \bx \in 
\pa\Omega, \nonumber \\ \nabla\cdot {\Phi}(\bx) &= 0 \quad \bx \in 
\pa\Omega. \label{eq:vectLapl}
\end{align}
We refer to appendix \ref{sect:VEP} for a proof that $\{\vec\Phi_n^{(s)}(\bm x)\}_{n \in \N^2_0, 1 \le s \le
  {m}_n}$ is an orthogonal basis of $\big(L^2(\Omega)\big)^3$, and to  \cite[section 3]{alonso2015electromagnetic} for an
explanation of why the basis is useful for the analysis of
electromagnetic waves in waveguides with perfectly conducting
boundaries.  In (\ref{eq:vectLapl}) the Laplacian $\Delta$ and
divergence $\nabla \cdot$ are with respect to $\bx \in \Omega$,
$\bm{n}$ is the outer normal at $\pa \Omega$, and $\bm{n}^\perp$ is
its rotation by $90$ degrees, counter-clockwise.  The vectors $\vec
\Phi_n^{(s)} = (\Phi_n^{(s)},\Phi_{n,3}^{(s)})$ are written in terms of their two dimensional
projection $\Phi_n^{(s)}$ in the cross-section plane and the longitudinal part
$\Phi_{n,3}^{(s)}$.  The eigenvalues $\lambda_n$ and eigenvectors $\vec
\Phi_n^{(s)}$ are indexed by $n \in \N^2_0 = \{(n_1,n_2): n_1^2+n_2^2
\neq 0\}$ and the multiplicity index $s = 1, \ldots, m_n$.

\begin{lemma}
\label{lem.1}
The solution of (\ref{eq:forward4}) has the following mode
decomposition
\begin{align}
\label{eq:Eo1}
\vec{\bm{E}}^{o}(\vbx) = \sum_{ n\in\N^2_0}\sum_{s=1}^{m_n}
\vec\Phi_n^{(s)}(\bm x) \Big(a^{+(s)}_{o,n} e^{i\beta_nx_3} +
b^{+(s)}_{o,n}e^{-i\beta_nx_3}\Big), \quad \text{for } x_3 \in (-L,0),
\end{align}
and 
\begin{align}
\vec{\bm{E}}^{o}(\vbx) = \sum_{ n\in\N^2_0}\sum_{s=1}^{m_n}
\vec\Phi_n^{(s)}(\bm x)  \, b^{-(s)}_{o,n}e^{-i\beta_nx_3}, \quad
\text{for } x_3 < -L,
\label{eq:Eo2}
\end{align}
where $a_{o,n}^+(s)$ and $b_o^{\pm(s)}$ are constant mode amplitudes
determined by the current excitation $\vbJ(\bx)$, and the superscripts
$\pm$ remind us that the field is evaluated in the forward direction
(toward the terminating boundary) or away from it.  The modes are
waves with wavenumber
\begin{align}
\beta_n= \begin{cases}
              \sqrt{k^2 - \lambda_n}, & k^2 \geq \lambda_n, \\
              i \sqrt{\lambda_n- k^2}, & k^2 < \lambda_n.
            \end{cases}
\end{align}
For a finite number of indexes $n \in \N^2_0$ the wavenumbers 
$\beta_n$ are real valued and the waves are propagating. The remaining
infinitely many waves are evanescent.
\end{lemma}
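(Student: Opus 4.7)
The plan is to solve (\ref{eq:forward4}) by separation of variables in the vector-Laplacian basis $\{\vec\Phi_n^{(s)}\}$ from the appendix. Since this basis is orthogonal in $(L^2(\Omega))^3$ and built precisely so as to incorporate the PEC condition on the lateral walls $\pa\Omega \times (-\infty,0)$, I would write
\[
\vec{\bm E}^o(\bx, x_3) = \sum_{n \in \N^2_0}\sum_{s=1}^{m_n} \vec\Phi_n^{(s)}(\bx)\, e_n^{(s)}(x_3),
\]
with unknown scalar profiles $e_n^{(s)}$. The strategy is to reduce (\ref{eq:forward4}) to a family of uncoupled one-dimensional Helmholtz problems for the $e_n^{(s)}$ on the half-line $(-\infty,0)$, solve them explicitly, and then read off (\ref{eq:Eo1})--(\ref{eq:Eo2}).

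To reduce to ODEs I would use the identity $\curl\curl = -\Delta + \nabla(\nabla\cdot)$, the eigenvalue relation $-\Delta\vec\Phi_n^{(s)} = \lambda_n \vec\Phi_n^{(s)}$ in the cross-section, and the fact that $\Delta$ applied to $\vec\Phi_n^{(s)}(\bx) e_n^{(s)}(x_3)$ splits into a cross-sectional Laplacian (absorbed by $\lambda_n$) plus $\partial_{x_3}^2$. Projecting the equation in the $L^2(\Omega)$ inner product against each $\vec\Phi_{n'}^{(s')}$, orthogonality should collapse the sum and leave
\[
-(e_n^{(s)})''(x_3) + (\lambda_n - k^2) e_n^{(s)}(x_3) = \i\omega\mu_o J_n^{(s)}\, \delta(x_3+L),
\]
i.e.\ $(e_n^{(s)})'' + \beta_n^2 e_n^{(s)} = -\i\omega\mu_o J_n^{(s)}\,\delta(x_3+L)$, where $J_n^{(s)}$ is the modal projection of $\vbJ$ onto $\vec\Phi_n^{(s)}$ (normalized by $\|\vec\Phi_n^{(s)}\|^2$).

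On each open interval the homogeneous ODE has general solution $a\, e^{\i\beta_n x_3} + b\, e^{-\i\beta_n x_3}$. On $(-L,0)$ both exponentials are bounded, so I keep both and obtain the two-term ansatz in (\ref{eq:Eo1}). On $(-\infty,-L)$, the radiation and boundedness condition as $x_3 \to -\infty$ forces the coefficient of $e^{\i\beta_n x_3}$ to vanish: for propagating modes ($\beta_n \in \R_+$) it would be incoming, and for evanescent modes ($\beta_n \in \i \R_+$) it would blow up. This leaves only $b^{-(s)}_{o,n} e^{-\i\beta_n x_3}$, which is (\ref{eq:Eo2}). Continuity of $e_n^{(s)}$ at $x_3=-L$, the unit jump in $(e_n^{(s)})'$ of amplitude $-\i\omega\mu_o J_n^{(s)}$, and the terminating-boundary relation coming from $\bm e_3 \times \vec{\bm E}^o|_{x_3=0} = 0$ (one linear relation between $a_{o,n}^{+(s)}$ and $b_{o,n}^{+(s)}$ for each mode) then give three linear equations that determine the three amplitudes uniquely.

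The main obstacle is the $\nabla(\nabla\cdot)$ part of $\curl\curl$, which a priori couples the transverse and longitudinal components of $\vec{\bm E}^o$ and thus threatens to mix different $(n,s)$ after projection: one must verify that the cross terms vanish on integration against $\vec\Phi_{n'}^{(s')}$. This is precisely where the specific structure of the basis in (\ref{eq:vectLapl}) is used, via integration by parts in $\bx$, with the three boundary conditions there (vanishing tangential component, vanishing $\Phi_{n,3}$ on $\pa\Omega$, and the divergence condition on $\pa\Omega$) killing all boundary terms and producing a clean diagonalization into TE-like and TM-like sub-bases. A secondary difficulty is the distributional source: the expansion should be understood in a suitable negative-order Sobolev space across $\{x_3=-L\}$, or, equivalently, one solves the ODE piecewise smoothly and enforces the jump condition as a matching, which is what the four-step procedure above effectively does.
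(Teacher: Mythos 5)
Your overall strategy (expansion in the basis $\{\vec\Phi_n^{(s)}\}$, reduction to one-dimensional problems in $x_3$, then jump, end-wall and radiation conditions) is the same as the paper's, but the step you flag as ``the main obstacle'' and then dismiss is exactly where your argument breaks. It is not true that, after projecting onto $\vec\Phi_{n}^{(s)}$, the cross terms coming from the $\vec\nabla\,\divv$ part vanish and you get an uncoupled scalar Helmholtz equation for every $s=1,2,3$. Orthogonality of the basis removes coupling across different $n$, but the operator itself mixes $s=2$ and $s=3$ within the same $n$: as computed in \eqref{eq:formula}, $\curl\curl\big[g(x_3)\vec\Phi_n^{(2)}(\bx)\big]$ has a component along $\vec\Phi_n^{(3)}$ and $\curl\curl\big[g(x_3)\vec\Phi_n^{(3)}(\bx)\big]$ has a component along $\vec\Phi_n^{(2)}$, so the projected system is: for $s=1$ the scalar equation you wrote, but for the TM pair the coupled system $-(k^2+\pa_{x_3}^2)g_n^{(2)}+\pa_{x_3}g_n^{(3)}= \mathrm{(source)}$ and $(\lambda_n-k^2)g_n^{(3)}-\lambda_n\pa_{x_3}g_n^{(2)}=\mathrm{(source)}$. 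In particular $g_n^{(3)}$ is not governed by a second-order ODE at all; away from the source it is algebraically slaved to $\pa_{x_3}g_n^{(2)}$, namely $g_n^{(3)}=\frac{\lambda_n}{\lambda_n-k^2}\pa_{x_3}g_n^{(2)}$ (cf.\ \eqref{eq:g3}), which is why in the explicit decomposition \eqref{eq:reference}--\eqref{eq:refLeft} the $s=3$ amplitudes are $\mp i\lambda_n/\beta_n$ times the $s=2$ amplitudes rather than free constants. The exponential form \eqref{eq:Eo1}--\eqref{eq:Eo2} survives (a derivative of the two exponentials is again a combination of the same exponentials), but your derivation of it for $s=3$ is not valid.

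The gap is not cosmetic, because your amplitude-determination argument fails for $s=3$ under the uncoupled ansatz: continuity and the jump at $x_3=-L$ give two equations, while the end-wall condition $\vec{\bm e}_3\times\vec{\bm E}^o|_{x_3=0}=0$ gives no equation for the longitudinal amplitudes since $\vec{\bm e}_3\times\vec\Phi_n^{(3)}=0$; your three unknowns $a^{+(3)}_{o,n},b^{+(3)}_{o,n},b^{-(3)}_{o,n}$ would be underdetermined. Moreover the correct jump conditions at the source also mix the components, e.g.\ $-\big[\pa_{x_3}g_n^{(2)}\big]_{-L}+\big[g_n^{(3)}\big]_{-L}$ equals the projection of $i\omega\mu_o\vbJ$ on $\vec\Phi_n^{(2)}$, not a condition on $g_n^{(2)}$ alone. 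The paper's route (its Appendix on the reference field) keeps the coupling, eliminates $g_n^{(3)}$ in favor of $\pa_{x_3}g_n^{(2)}$, solves the resulting Helmholtz equation for $g_n^{(1)}$ and $g_n^{(2)}$, and then applies the mixed jump conditions, the end-wall relations $a^{+(1)}_{o,n}+b^{+(1)}_{o,n}=0$, $a^{+(2)}_{o,n}+b^{+(2)}_{o,n}=0$, and the radiation condition for $x_3<-L$. To repair your proof you must redo the projection retaining the $\vec\nabla\,\divv$ cross terms (equivalently, treat the TE mode $s=1$ and the TM pair $(s=2,3)$ as separate families, the latter solved as a coupled system).
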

\begin{proof}Equations (\ref{eq:Eo1})-(\ref{eq:Eo2}) are obtained by solving
(\ref{eq:forward4}) with separation of variables. Since the
eigenfunctions of the vectorial Laplacian in (\ref{eq:vectLapl}) form
an orthogonal basis of $\big(L^2(\Omega)\big)^3$, as shown in
Appendix \ref{sect:VEP}, we can expand $\vec{\bm E}^o$ in this basis for each
$x_3 \ne - L$. Equations (\ref{eq:Eo1})-(\ref{eq:Eo2}) follow by
substitution in (\ref{eq:forward4}) and straightforward calculation
given in appendix \ref{sect:REF}. The mode amplitudes are derived from
jump conditions at the source coordinate $x_3 = -L$, reflection
conditions at the terminating boundary at $x_3=0$, and the radiation
condition at $x_3 \to -\infty$. The boundary conditions at $\pa
\Omega$ are built into the expansion in the basis $\{\vec
\Phi_n^{(s)}\}$. Note that in the interval $x_3 \in (-L,0)$ between
the source and the terminating boundary there are both forward and
backward propagating waves and decaying and growing evanescent
waves. On the other side of the source, for $x_3 < -L$, the
propagating waves are outgoing and the evanescent waves are decaying,
as imposed by the radiation condition.
\end{proof}

The simple geometry of the waveguide, with rectangular cross-section,
allows us to write explicitly the mode decomposition in
(\ref{eq:Eo1})--(\ref{eq:Eo2}). The eigenvalues are 
\begin{equation}
  \lambda_n = \left(\frac{\pi n_1}{L_1}\right)^2 + \left(\frac{\pi
    n_2}{L_2}\right)^2, \qquad n = (n_1,n_2)\in \N_o^2,
\label{eq:eigenvals}
\end{equation}
and by assuming that $(L_1/L_2)^2$ is not a rational number, we ensure that 
$\lambda_n \ne \lambda_{n'}$ if $n = (n_1,n_2) \ne n' =
(n_1',n_2')$. This limits the multiplicity $m_n$ of the eigenvalues to
\begin{equation}
m_n = \left\{ \begin{array}{ll} 
1  &\mbox{if}~ ~ n_1 n_2 = 0, \\
3 &\mbox{otherwise}. \end{array} \right.
\label{eq:multiplic}
\end{equation}
For the index pairs satisfying $n_1 n_2 = 0$, the eigenvalues are
simple, with eigenvectors 
\begin{equation}
\vec \Phi_n^{(1)}(\bx) = \delta_{n_20}\left( \begin{matrix} 0
  \\ \\\sin \big( \frac{\pi n_1x_1}{L_1}\big) \\ 0 \end{matrix}
\right) + \delta_{n_10} \left( \begin{matrix} \sin \big(\frac{\pi
    n_2x_2}{L_2}\big) \\ 0 \\ 0 \end{matrix}
\right), \label{eq:eigvect1} 
\end{equation}
satisfying the divergence free condition $\vec{\nabla} \cdot \vec
\Phi^{(1)}(\vx) = 0$.  Otherwise, there is triple multiplicity of the
eigenvalues, and the eigenvectors are given by
\begin{align}
\vec\Phi^{(1)}_n(\bx) &= 
\left( \begin{matrix} \frac{\pi n_2}{L_2} \cos \big(\frac{\pi
    n_1x_1}{L_1}\big) \sin \big(\frac{\pi n_2x_2}{L_2}\big)
  \\ -\frac{\pi n_1}{L_1}\sin \big( \frac{\pi n_1x_1}{L_1}\big) \cos
  \big( \frac{\pi n_2x_2}{L_2}\big) \\ 0 \end{matrix} \right),
\label{eq:eigvect2} \\
\vec\Phi^{(2)}_n(\bx) &=
\left( \begin{matrix} \frac{\pi n_1}{L_1} \cos \big(\frac{\pi
    n_1x_1}{L_1}\big) \sin \big(\frac{\pi n_2x_2}{L_2}\big)\\ \frac{\pi
    n_2}{L_2}\sin \big( \frac{\pi n_1x_1}{L_1}\big) \cos \big(
  \frac{\pi n_2x_2}{L_2}\big) \\ 0
\end{matrix}\right), \label{eq:eigvect3}
\end{align}
which are vectors in the cross-range plane, satisfying the divergence
free condition $\vec{\nabla} \cdot \vec \Phi^{(1)}(\vx) = 0$ and the
curl free condition $\curl \vec \Phi^{(2)}(\bx) = 0$, and
\begin{align}\vec\Phi^{(3)}_n(\bx) &=
\left( \begin{matrix} 0 \\ 0 \\ \sin \big( \frac{\pi n_1x_1}{L_1}\big)
  \sin \big( \frac{\pi n_2x_2}{L_2}\big)
\end{matrix}\right), \label{eq:eigvect4}
\end{align}
which is in the longitudinal direction. 

Equations (\ref{eq:Eo1})--(\ref{eq:Eo2}) take the explicit form 
\begin{align}
\label{eq:reference}
\vec{\bm{E}}^{o}(\vbx) = \sum_{ n\in\N^2_0}\sum_{s=1}^{m_n}
\Big[ & \delta_{s1} \vec\Phi_n^{(1)}(\bm x) (a^{+(1)}_{o,n}
  e^{i\beta_nx_3} + b^{+(1)}_{o,n}e^{-i\beta_nx_3}) \nonumber \\ &+
  \Big( \delta_{s2} \vec\Phi_n^{(2)}(\bm x) - \frac{i
      \lambda_n}{\beta_n} \delta_{s3} \vec\Phi_n^{(3)}(\bx)\Big)
  a_{o,n}^{+(2)}e^{i \beta_n x_3} \nonumber \\ &+ \Big(
    \delta_{s2} \vec\Phi_n^{(2)}(\bm x) + \frac{i \lambda_n}{\beta_n}
    \delta_{s3} \vec\Phi_n^{(3)}(\bx)\Big) b_{o,n}^{+(2)} e^{-i \beta_n x_3} \Big],
\quad \text{for } x_3 > -L,
\end{align}
and
\begin{align}
\vec{\bm{E}}^{o}(\vbx) = \sum_{n\in\N^2_0}\sum_{s=1}^{m_n}
\Big[& \delta_{s1} \vec\Phi_n^{(1)}(\bm x)
  b^{-(1)}_{o,n}e^{-i\beta_nx_3} + \nonumber \\ & \Big(\delta_{s2}
  \vec\Phi_n^{(2)}(\bm x) + \frac{i\lambda_n}{\beta_n} \delta_{s3}
  \vec\Phi_n^{(3)}(\bm x)\Big) b^{-(2)}_{o,n} e^{-i\beta_nx_3} \Big] ,
 \quad \text{for } x_3 < -L. \label{eq:refLeft}
\end{align}
The field $\vec{\bm E}^o$ is a superposition of transverse electric waves with
amplitudes $a_{n,o}^{+(1)}$ and $b_{n,o}^{\pm(1)}$, and transverse
magnetic waves with amplitudes $a_{n,o}^{+(2)}$ and
$b_{n,o}^{\pm(2)}$. The name transverse electric refers to the fact
that the third component of $\vec \Phi_n^{(1)}(\bx)$, corresponding to
the longitudinal electric field, equals zero. Similarly, the name
transverse magnetic refers to the fact that
\[
\vec{\bf e}_3 \cdot \curl \Big( \vec\Phi_n^{(2)}(\bm x) \pm \frac{i
  \lambda_n}{\beta_n} \vec\Phi_n^{(3)}(\bx) \Big) = \vec{\bf e}_3
\cdot \curl \vec\Phi_n^{(2)}(\bm x) = 0,
\]
and thus the longitudinal magnetic field is zero by Faraday's law.

The transverse electric mode amplitudes are given by
\begin{align}
\label{eq:ampA1}
a^{+(1)}_{o,n} =
-\frac{\omega\mu_o\lin\vec\Phi^{(1)}_n,\vbJ\rin}{2\beta_n \|\vec
  \Phi_n^{(1)}\|^2} e^{i\beta_nL}, \qquad b^{+(1)}_{o,n} =
\frac{\omega\mu_o\lin\vec\Phi^{(1)}_n,\vbJ\rin}{2\beta_n \|\vec
  \Phi_n^{(1)}\|^2}e^{i\beta_nL},
\end{align}
for $x_3 \in (-L,0)$ and by 
\begin{equation}
\label{eq:ampB1}
b^{-(1)}_{o,n} = \frac{\omega\mu_o \lin \vec\Phi^{(1)}_n,\vbJ
  \rin}{2\beta_n\|\vec \Phi_n^{(1)}\|^2} \left[ e^{i\beta_nL} -
  e^{-i\beta_nL} \right],
\end{equation}
for $x_3 < -L$.  Here $\lin \cdot, \cdot \rin$ denotes the inner
product in $\big(L^2(\Omega)\big)^3$ and $\| \cdot \|$ is the induced
norm.  The transverse magnetic mode amplitudes are
\begin{align}
 a^{+(2)}_{o,n} &= \left[-\frac{\om \mu_o\beta_n}{2k^2}\frac{\lin
     \vec\Phi^{(2)}_n,\vbJ\rin}{\|\vec \Phi_n^{(2)}\|^2} - \frac{i \om
     \mu_o}{2\lambda_n} \frac{\lin\vec\Phi^{(3)}_n,\vbJ\rin}{\|\vec
     \Phi_n^{(3)}\|^2}\right] e^{i\beta_nL}, \nonumber
 \\ b^{+(2)}_{o,n} &=
 \left[\frac{\omega\mu_o\beta_n}{2k^2}\frac{\lin\vec\Phi^{(2)}_n,\vbJ\rin}{
     \|\vec\Phi^{(2)}_n\|^2} +
   \frac{i\omega\mu_o}{2\lambda_n}\frac{\lin\vec\Phi^{(3)}_n,\vbJ\rin}{
     \|\vec\Phi^{(3)}_n\|^2} \right] e^{i\beta_nL}, \label{eq:ampAB2}
\end{align}
for $x_3 \in (-L,0)$ and 
\begin{equation}
 b^{-(2)}_{o,n} = \frac{\omega\mu_o\beta_n}{2k^2}\frac{\lin
   \vec\Phi^{(2)}_n,\vbJ \rin}{\|\vec\Phi^{(2)}_n\|^2} \left[
   e^{i\beta_nL} - e^{-i\beta_nL} \right] +
 \frac{i\omega\mu_o}{2\lambda_n}\frac{\lin\vec\Phi^{(3)}_n,\vbJ\rin}{
   \|\vec\Phi^{(3)}_n\|^2} \left[ e^{i\beta_nL} + e^{-i\beta_nL}
   \right], \label{eq:ampB2}
\end{equation}
for $x_3 < -L$. 
\subsection{The scattered field and radiation condition}
\label{sect:FP2}
The scattered field due to the reflectors supported in $D \subset W$
is defined by
\begin{equation}
\vec{\bm E}^{sc}(\vx) = \vec{\bm E}(\vx)-\vec{\bm E}^o(\vx),
\label{eq:SC1}
\end{equation}
where $\vec{\bm E}(\vx)$ is the solution of equation (\ref{eq:Maxwell1}),
with the electric permittivity tensor (\ref{eq:3}). Explicitly,
$\vec{\bm E}^{sc}$ satisfies
\begin{align}
\nonumber \curl\curl \vec{\bm{E}}^{sc}(\vx) -
k^2\vec{\bm{E}}^{sc}(\vx) &= k^2 V(\vx)
\vec{\bm{E}}(\vx) \qquad \vx \in W, \\
\label{eq:Scattered2}
\vbn(\vx) \times\vec{\bm{E}}^{sc}(\vx)  &= 0  \qquad \vx \in \pa W,
\end{align}
where 
\begin{equation}
\label{eq:defV}
V(\vx) = \frac{\eps(\vx)}{\eps_o} - I = 
1_{_D}(\vx) \big(\eps_r(\vx) - I\big)
\end{equation}
is the scattering potential. The radiation condition, which states that
the scattered field is bounded and outgoing away from the reflectors,
takes the form
\begin{align}
\hspace{-0.1in}\vec{\bm{E}}^{sc}(\vbx) = \sum_{n\in\N^2_0}\sum_{s=1}^{m_n}
\Big[& \delta_{s1} \vec\Phi_n^{(1)}(\bm x) b^{-(1)}_{n}e^{-i\beta_nx_3}
  \nonumber \\&+ \Big(\delta_{s2} \vec\Phi_n^{(2)}(\bm x) +
  \frac{i\lambda_n}{\beta_n} \delta_{s3} \vec\Phi_n^{(3)}(\bm
  x)\Big) b^{-(2)}_{n} e^{-i\beta_nx_3}\Big] ,\label{eq:radiationCond}
\end{align}
for locations $\vx = (\bx,x_3) \in W$ satisfying $x_3 < \inf\{x_3:
\vbx=(x_1,x_2,x_3) \in D\}$. Note the similarity of
(\ref{eq:radiationCond}) with (\ref{eq:refLeft}), the expression of
the reference field $\vec{\bm E}^{(o)}$ on the left of the source. The mode
amplitudes $b_n^{-(1)}$ and $b_n^{-(2)}$ contain the information about
the reflectors supported in $D$ and their expression follows from the
calculations in the next section.

\subsection{Solvability of the forward problem}
\label{sect:FP3}
Here we study the solvability of the forward scattering problem
(\ref{eq:Scattered2})--(\ref{eq:radiationCond}). We begin with the
derivation of the Green's tensor $\mathbb{G}(\vx,\vby)$ and then
restate the scattering problem as an electromagnetic
Lippmann-Schwinger equation, for which we can prove the Fredholm
property. The discussion assumes that the domain $D$ that supports the
reflectors does not touch the boundary, and that the scattering
potential $V$ is bounded, entrywise.

The Green's tensor $\mathbb{G}(\vx,\vby) \in \mathbb{C}^{3 \times 3}$
satisfies
\begin{align}
\nonumber \curl\curl \mathbb{G}(\vx,\vby) - k^2 \mathbb{G}(\vx,\vby)
&= -\delta(\vx -\vby) I \qquad \vx \in W, \\
\label{eq:Tensor2}
\vbn(\vx) \times\mathbb{G}(\vx,\vby) &= 0\qquad \vx \in \pa W,
\end{align}
where we recall that $I$ is the $3\times 3$ identity matrix, and the
curl is taken columnwise. In addition, each column of
$\mathbb{G}(\vx,\vby)$ satisfies a radiation condition similar to
(\ref{eq:radiationCond}) for $x_3 < y_3$, which says that the Green's
function is bounded and outgoing. The expression of $\mathbb{G}$ is
given in the next lemma, proved in appendix \ref{sect:proofLem2}.
\begin{lemma}
\label{lem.2}
Let $\vx \ne \vby$ and $\vx, \vby \in W$. The Green's tensor
$\mathbb{G}(\cdot, \vby)$ is given by
\begin{align}
\label{eq:TensorForm}
\mathbb{G}(\vx, \vby)= (\vec{G}_1,\vec{G}_2,\vec{G}_3)(\vx, \vby) +
\frac{1}{k^2} \vec\nabla \divv(\vec{G}_1,\vec{G}_2,\vec{G}_3)(\vx,
\vby),
\end{align}
with divergence taken columnwise. The vectors $\vec{G}_j$ with $j = 1,
\ldots, 3$ are defined by
\begin{equation}
  \label{eq:Gj}
\vec{G}_j(\vx, \vby) = \sum_{ n\in\N^2_0} \sum_{s=1}^{m_n}
\frac{\vec{\bm{e}}_j\cdot \vec\Phi^{(s)}_{n}(\bm{y})}{\|
  \vec\Phi^{(s)}_{n}\|^2}\big[e^{i\beta_n|x_3-y_3|} +
  (2\delta_{s3}-1)e^{-i\beta_n(x_3+y_3)}\big]
\frac{\vec\Phi^{(s)}_n(\bx)}{2 i \beta_n}.
\end{equation}
They satisfy equations
\begin{align}
\nonumber \Delta \vec{G}_j(\vx,\vby) + k^2 \vec{G}_j(\vx,\vby) &=
\delta(\vx -\vby) \vec{\bm{e}_j} \qquad \vx \in W, \\
\label{eq:Green2}
\vbn(\vx) \times \left[k^2\vec{G}_j(\vx,\vby) + \vec\nabla
  \divv\vec{G}_j(\vx,\vby)\right] &= 0\qquad \vx \in \pa W,
\end{align}
and a radiation condition similar to \eqref{eq:radiationCond} for $x_3
< y_3$, which says that the components of $\vec{G}_j$ are outgoing or
decaying waves.
\end{lemma}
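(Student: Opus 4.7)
The plan is to reduce the tensorial equation (\ref{eq:Tensor2}) to a vector Helmholtz equation column by column via a Hertz-potential ansatz
\begin{equation*}
\mathbb{G}(\vx,\vby) = \mathbb{F}(\vx,\vby) + \frac{1}{k^2}\,\vec\nabla\divv \mathbb{F}(\vx,\vby), \qquad \mathbb{F} := (\vec{G}_1,\vec{G}_2,\vec{G}_3).
\end{equation*}
The vector identity $\curl\curl = -\Delta + \vec\nabla\divv$, together with the fact that $\curl$ annihilates any gradient, yields $\curl\curl \mathbb{G} = -\Delta \mathbb{F} + \vec\nabla\divv \mathbb{F}$. Substituting into (\ref{eq:Tensor2}) and cancelling the gradient-divergence terms reduces the tensor equation to (\ref{eq:Green2}) column by column.

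Next I solve (\ref{eq:Green2}) by separation of variables. Writing $\vec{G}_j(\vx,\vby) = \sum_{n,s} g_{j,n}^{(s)}(x_3,\vby)\,\vec\Phi_n^{(s)}(\bx)$ and expanding the source in the orthogonal basis,
\begin{equation*}
\delta(\bx-\bm{y})\,\vec{\bm{e}}_j = \sum_{n,s} \frac{\vec{\bm{e}}_j\cdot \vec\Phi_n^{(s)}(\bm{y})}{\|\vec\Phi_n^{(s)}\|^2}\,\vec\Phi_n^{(s)}(\bx),
\end{equation*}
the PDE decouples into one-dimensional ODEs
\begin{equation*}
\big(\partial_{x_3}^2 + \beta_n^2\big)\,g_{j,n}^{(s)}(x_3,\vby) = \frac{\vec{\bm{e}}_j\cdot \vec\Phi_n^{(s)}(\bm{y})}{\|\vec\Phi_n^{(s)}\|^2}\,\delta(x_3-y_3).
\end{equation*}
The outgoing fundamental solution $e^{i\beta_n|x_3-y_3|}/(2i\beta_n)$ automatically satisfies the radiation condition as $x_3\to-\infty$, and the general solution consistent with this condition is this fundamental solution plus a homogeneous reflected wave $R_s\,e^{-i\beta_n(x_3+y_3)}/(2i\beta_n)$, with a reflection coefficient $R_s$ to be determined.

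To fix $R_s$ I impose $\vbn\times\mathbb{G}|_{x_3=0}=0$, i.e.\ vanishing of the first two components of $\mathbb{G}$ at the terminating wall. The subtle point is that this mixes $\vec{G}_j$ and $\vec\nabla\divv \vec{G}_j$. Since $\vec\Phi_n^{(1)}$ and $\vec\Phi_n^{(2)}$ are purely transverse (third component zero) while $\vec\Phi_n^{(3)}$ is purely longitudinal, and since $\divv(g(x_3)\vec\Phi_n^{(s)}(\bx))$ is nontrivial only for $s=2$ (equal to $-\lambda_n g(x_3)\sin(\pi n_1 x_1/L_1)\sin(\pi n_2 x_2/L_2)$) and for $s=3$ (equal to $g'(x_3)\sin(\pi n_1 x_1/L_1)\sin(\pi n_2 x_2/L_2)$), the tangential boundary condition disentangles cleanly. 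For $s=1,2$ it becomes $g_{j,n}^{(s)}(0,\vby)=0$, which forces $R_s=-1$. For $s=3$, whose contribution to the tangential part of $\mathbb{G}$ is only through $\vec\nabla\divv \vec{G}_j$, linear independence of $\vec\Phi_n^{(2)}(\bm{y})$ and $\vec\Phi_n^{(3)}(\bm{y})$ as $\bm{y}$ varies forces $(g_{j,n}^{(3)})'(0,\vby)=0$ separately, giving $R_3=+1$. The two cases are encoded uniformly by the factor $(2\delta_{s3}-1)$ in (\ref{eq:Gj}). The side-wall condition $\vbn\times\mathbb{G}=0$ on $\partial\Omega\times(-\infty,0)$ is automatic: $\vbn\times\vec\Phi_n^{(s)}=0$ there by construction of the basis, and $\divv\vec{G}_j$ inherits the factor $\sin(\pi n_1 x_1/L_1)\sin(\pi n_2 x_2/L_2)$ that vanishes on $\partial\Omega$, so its tangential gradient vanishes as well.

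The main obstacle is the step above: a single tangential condition on $\mathbb{G}$ at $x_3=0$ must be disentangled into a Dirichlet-type condition $g_{j,n}^{(s)}(0,\vby)=0$ for the transverse modes $s=1,2$ and a Neumann-type condition $(g_{j,n}^{(3)})'(0,\vby)=0$ for the longitudinal mode $s=3$, both of which arise only after careful bookkeeping of how each basis mode feeds into the tangential components of $\vec{G}_j$ and of $\vec\nabla\divv \vec{G}_j$. Once these conditions are extracted, the formula (\ref{eq:Gj}), the reduced problem (\ref{eq:Green2}), and the radiation condition follow by direct substitution.
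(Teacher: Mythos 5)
Your proposal is correct and follows essentially the same route as the paper: the ansatz $\mathbb{G}=(\vec{G}_1,\vec{G}_2,\vec{G}_3)+k^{-2}\vec\nabla\divv(\vec{G}_1,\vec{G}_2,\vec{G}_3)$, expansion in the basis $\{\vec\Phi_n^{(s)}\}$, one-dimensional ODEs with the outgoing fundamental solution, and the end-wall condition at $x_3=0$ fixing the reflection coefficients. The only difference is bookkeeping at the terminating wall: the paper writes the general (underdetermined) system for the mode coefficients, notes that only certain combinations of the $s=2,3$ coefficients affect $\mathbb{G}$, and then picks the representative with $\pa_{x_3}g_n^{j(3)}(0,\vby)=0$, whereas you build the proportionality to $\vec{\bm e}_j\cdot\vec\Phi_n^{(s)}(\bm y)$ into the ansatz so that for each column only one of the $s=2$ or $s=3$ terms survives and the Dirichlet/Neumann split (the factor $2\delta_{s3}-1$) follows at once — an equivalent construction.
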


To state the scattering problem (\ref{eq:Scattered2}) as a
Lippmann-Schwinger equation, we follow the approach in
\cite{Kirsc2007a}.  For a finite $\bar{L}\ge L$, we define the
truncated waveguide
\[
W_{\bar L} = (0,L_1)\times(0,L_2) \times(-\bar{L},0) \subset W,
\]
and introduce the space 
\[
H(\curll,W_{\bar{L}}):=\left\{\vbu \in (L^2(W_{\bar L}))^3: \curl \vbu
\in (L^2(W_{\bar L}))^3\right\},
\]
equipped with the inner product
\[
(\vec{\bm u},\vec{\bm v})_\icu = \int_{W_{\bar L}} d \vx \Big[ \vec{\bm
    u}(\vx) \cdot \overline{\vec{\bm v}(\vx)} + \curl \vec{\bm u}(\vx)
  \cdot \curl \overline{\vec{\bm u}(\vx)} \Big],
\]
where the bar denotes complex conjugate. The induced norm is $
\|\vec{\bm u}\|_\icu = \sqrt{(\vec{\bm u},\vec{\bm u})_\icu}.  $

 From \cite{Kirsc2007a} we known that $\mathcal{M}:\big(L^2(D)\big)^3
 \to H(\curll,W_{\bar{L}})$ defined by
\[
\mathcal{M}(\vec{\bm u})(\vx) = (k^2+\vec\nabla\divv)\int_D \, \frac{e^{i k
    |\vx-\vby|}}{4 \pi |\vx-\vby|} \vec{\bm u}(\vby)\d \vby,
\]
is a linear bounded mapping. Moreover, $\vec{\bm v} =
\mathcal{M}(\vec{\bm u})$ is the unique radiating variational solution
of $\curl\curl \vbv - k^2\vbv = k^2\vec{\bm u}$, meaning that
\begin{align}
\int_{W_{\bar{L}}} \left(\curl \vbv\cdot\curl \ol\vbphi -
  k^2\vbv\cdot\ol\vbphi\right)\d \vx = k^2\int_D\vbg\cdot\ol\vbphi \d \vx
\end{align}
for all $\vbphi \in H(\curll,W_{\bar{L}})$, with compact support in
$W_{\bar{L}}$. This result can be extended to our problem because the
difference of Green's functions $\vec{G}_j(\vx,\vby) - \frac{e^{i k
    |\vx-\vby|}}{4 \pi |\vx-\vby|}\vec{\bm e}_j$ is analytic and
satisfies
\[
\big(\Delta + k^2 \big) \Big(\vec{G}_j(\vx,\vby) - \frac{e^{i k
    |\vx-\vby|}}{4 \pi |\vx-\vby|}\vec{\bm e}_j\Big) = 0.
\]

Thus, the mapping $\mathcal{L}:\big(L^2(D)\big)^3 \to
H(\curll,W_{\bar{L}})$ defined by
\begin{equation}
\mathcal{L}(\vec{\bm u}) = (k^2+\vec\nabla\divv)\int_D
\big(\vec{G}_1,\vec{G}_2,\vec{G}_3\Big)(\cdot,\vby)\, \vec{\bm
  u}(\vby)\d \vby,
\label{eq:Ki1}
\end{equation}
is linear and bounded, and $\vec{\bm v} = \cL(\vbu)$ is the radiating
variational solution of the equation $\curl\curl \vbv - k^2\vbv =
k^2\vec{\bm u}$ in the waveguide.  We are interested in $\vec{\bm u} =
V \vec{\bm E}$, so that $\mathcal{L}(V \vec{\bm E})$ satisfies the partial
differential equation (\ref{eq:Scattered2}).  To show that this is
$\vec{\bm E}^{sc}$ it remains to check that $\mathcal{L}(V \vec{\bm E})$ satisfies
the perfectly conducting boundary conditions. This follows from the
boundary conditions in (\ref{eq:Green2}), because $D$ does not touch
the boundary, so we can write
\[
\vec{\bm n}(\vec{\bm x})\times \mathcal{L}(\vec{\bm u}) = \int_{D}
\vec{\bm n}(\vx) \times
(k^2+\vec\nabla\divv)\big(\vec{G}_1,\vec{G}_2,\vec{G}_3\big)(\cdot,\vby)\,
\vec{\bm u}(\vby)\d \vby = 0, \qquad \vx \in \partial W.
\]
We have now shown that $\vec{\bm E}^{sc}(\vx) = \mathcal{L}(V \vec{\bm E})$, or
equivalently, that it solves the Lippmann-Schwinger equation
\begin{equation}
\vec{\bm E}^{sc}(\vx) = (k^2+\vec\nabla\divv)\int_D
\big(\vec{G}_1,\vec{G}_2,\vec{G}_3\Big)(\cdot,\vby)\, V(\vby) \vec{\bm E}(\vby)\d \vby.
\label{eq:Ki2}
\end{equation}

The next Theorem proves a G\r{a}rding inequality from which we can
conclude the Fredholm property.
\begin{theorem}
\label{thm.1}
There exists a compact operator $\mathcal{K}:H(\curll,W_{\bar{L}}) \to
H(\curll,W_{\bar{L}})$ and a positive constant $C$ such that
\begin{equation}
\Re \big(\vec{\bm u} - \mathcal{L}(V\vec{\bm u}) + \mathcal{K} \vbu
,\vec{\bm u}\big)_\icu \ge C \|\vec{\bf u}\|^2_\icu, \qquad \forall
\, \vbu \in H(\curll,W_{\bar{L}}).
\label{eq:Gard}
\end{equation}
Therefore, $I - \mathcal{L}(V\cdot)$ is a Fredholm operator.
\end{theorem}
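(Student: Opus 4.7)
The plan is to reduce the Gårding inequality for the waveguide operator $\mathcal L(V\,\cdot\,)$ to the corresponding inequality for the free-space electromagnetic volume integral operator $\mathcal M(V\,\cdot\,)$ established in \cite{Kirsc2007a}. The reduction is suggested by the observation already made in the text: the difference between $\vec G_j(\vx,\vby)$ and the free-space Newtonian kernel $\frac{e^{ik|\vx-\vby|}}{4\pi|\vx-\vby|}\vec{\bm e}_j$ satisfies the homogeneous Helmholtz equation and is therefore analytic on $W\times W$. Writing $\mathcal L(V\vbu) = \mathcal M(V\vbu) + \mathcal S(V\vbu)$, where $\mathcal S$ is the integral operator built from this analytic remainder, the goal becomes to show that $\mathcal S(V\,\cdot\,)$ is compact on $H(\curll,W_{\bar L})$, so it can be absorbed into the compact perturbation $\mathcal K$.

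More precisely, set $\vec H_j(\vx,\vby) := \vec G_j(\vx,\vby) - \frac{e^{ik|\vx-\vby|}}{4\pi|\vx-\vby|}\vec{\bm e}_j$ and
\[
\mathcal S(\vbu)(\vx) := (k^2 + \vec\nabla\divv)\int_D \bigl(\vec H_1,\vec H_2,\vec H_3\bigr)(\vx,\vby)\,\vbu(\vby)\,d\vby.
\]
Since $D\Subset W$ is bounded away from $\partial W$ and from the truncation cross-section $x_3=-\bar L$, the kernel $\vec H_j$ is jointly $C^\infty$ on $\overline D\times\overline{W_{\bar L}}$ with all derivatives bounded: near the diagonal the singularities of $\vec G_j$ and the free-space kernel cancel, while away from the diagonal both are smooth, and uniform control follows from the mode series in (\ref{eq:Gj}) through the exponential decay of the evanescent modes and the plane-wave smoothness of the finitely many propagating modes. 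It follows that $\mathcal S$ maps $(L^2(D))^3$ boundedly into $H^2(W_{\bar L})^3$, so that composition with the bounded multiplication $\vbu\mapsto V\vbu$ from $H(\curll,W_{\bar L})$ into $(L^2(D))^3$, together with the compact embedding $H^2(W_{\bar L})^3\hookrightarrow H(\curll,W_{\bar L})$, yields compactness of $\mathcal S(V\,\cdot\,):H(\curll,W_{\bar L})\to H(\curll,W_{\bar L})$.

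I would then invoke \cite{Kirsc2007a}: under the standing hypothesis that $\eps_r$ is Hermitian positive definite on $D$, there exist a compact operator $\mathcal K_0$ on $H(\curll,W_{\bar L})$ and a constant $C>0$ such that
\[
\Re\bigl(\vbu - \mathcal M(V\vbu) + \mathcal K_0 \vbu,\,\vbu\bigr)_\icu \,\ge\, C\,\|\vbu\|^2_\icu, \qquad \forall\,\vbu\in H(\curll,W_{\bar L}).
\]
Setting $\mathcal K := \mathcal K_0 + \mathcal S(V\,\cdot\,)$, which is compact as the sum of two compact operators, and using the decomposition $\mathcal L(V\vbu)=\mathcal M(V\vbu)+\mathcal S(V\vbu)$, the inequality (\ref{eq:Gard}) follows with the same constant $C$. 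The Fredholm property of $I-\mathcal L(V\,\cdot\,)$ is then the standard consequence that an operator admitting a Gårding inequality on a Hilbert space is a compact perturbation of a coercive, hence invertible operator, and is therefore Fredholm of index zero.

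The delicate step I anticipate is the compactness claim for $\mathcal S(V\,\cdot\,)$. The presence of the second-order differential operator $(k^2+\vec\nabla\divv)$ in front of the integral requires one to control not only $\vec H_j$ itself but also its second-order $\vx$-derivatives, and the target space $H(\curll,W_{\bar L})$ controls only the curl rather than the full gradient. Both issues should be absorbed by the pointwise smoothness of $\vec H_j$ on $\overline D\times\overline{W_{\bar L}}$: once the image of $\mathcal S$ is known to sit in $H^2$, the compactness into $H(\curll,W_{\bar L})$ factors through the classical Rellich embedding applied to both the function and its curl.
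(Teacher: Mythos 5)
Your overall architecture---peel off a smooth remainder kernel and reduce to the free-space volume potential---is coherent, and your compactness argument for $\mathcal{S}(V\cdot)$ is essentially sound; it plays the same role as the operator $\mathcal{K}_1=(\cL-\cL_o)(V\cdot)$ in the paper's proof, which is compact for the same reason (the kernel difference is at worst weakly singular). But the paper does not compare $\cL$ with the free-space operator $\mathcal{M}$ at wavenumber $k$: it compares it with an auxiliary \emph{waveguide} operator $\cL_o$ built from Green's functions at the imaginary wavenumber $i$, and then produces the coercive part directly. Integration by parts over $W_{\bar L}$ turns $-\big(\cL_o(V\vbu),\vbu\big)_\icu$ into $\int_D(\eps_r-I)\vbu\cdot\ol\vbu\,\d\vx$ plus a boundary integral over $\pa W_{\bar L}$; positive definiteness of the matrix $\eps_r$ then gives $\|\vbu\|_\icu^2+\int_D(\eps_r-I)\vbu\cdot\ol\vbu\ \ge\ C\|\vbu\|_\icu^2$, and the boundary integral is shown to define a compact operator $\mathcal{K}_2$ via the trace space $H^{-1/2}_\idiv(\pa W_{\bar L})$ and the smoothing operator $\mathcal{T}$ supported in a neighborhood of $\pa W_{\bar L}$.

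The genuine gap in your proposal is the step where you ``invoke \cite{Kirsc2007a}'' for a ready-made G\r{a}rding inequality $\Re\big(\vbu-\mathcal{M}(V\vbu)+\mathcal{K}_0\vbu,\vbu\big)_\icu\ge C\|\vbu\|_\icu^2$ on $H(\curll,W_{\bar L})$. That inequality is precisely the nontrivial content of the theorem, and it is not clearly available off the shelf in the form you need: the paper borrows from \cite{Kirsc2007a} only the mapping properties of the volume potential and the fact that the analysis applies at wavenumber $i$, not a G\r{a}rding inequality for an anisotropic, matrix-valued Hermitian positive definite contrast posed with the $(\cdot,\cdot)_\icu$ inner product on the bounded domain $W_{\bar L}$. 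If you try to prove that free-space inequality yourself, you are forced into exactly the two ingredients the paper supplies: an auxiliary operator at imaginary wavenumber to generate the sign-definite term $\int_D\eps_r\vbu\cdot\ol\vbu$, and a compactness proof for the boundary term over $\pa W_{\bar L}$ that integration by parts on a bounded domain unavoidably produces. Your smooth-remainder operator $\mathcal{S}$ accounts only for the analogue of $\mathcal{K}_1$; the analogue of $\mathcal{K}_2$ is hidden inside the citation. So either verify that the cited result really covers the matrix contrast and the bounded-domain inner product you use, or carry out the $\cL_o$ construction, the integration by parts, and the $\mathcal{K}_2$ compactness argument explicitly, as the paper does. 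Two smaller points: the joint smoothness and boundedness of $\vec{H}_j$ on $\overline{D}\times\overline{W_{\bar L}}$ (across the diagonal and up to $\pa W_{\bar L}$) deserves an argument---interior elliptic regularity for the difference plus the exponentially convergent modal series away from the source---and your compactness chain also uses that $V$ is bounded, an assumption the paper states explicitly.
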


\begin{proof}
Let us define an auxilliary operator $\mathcal{L}_o:\big(L^2(D)\big)^3 \to
H(\curll,W_{\bar{L}})$, 
\begin{equation}
\mathcal{L}_o(\vec{\bm u}) = (-1+\vec\nabla\divv)\int_D
\big(\vec{\mathcal{G}}_1,\vec{\mathcal{G}}_2,\vec{\mathcal{G}}_3\Big)(\cdot,\vby)\,
\vec{\bm u}(\vby)\d \vby,
\label{eq:PF1}
\end{equation}
where $\vec{\mathcal{G}}_j$ solve
\begin{equation}
\Delta \vec{\mathcal{G}}_j(\vx,\vby) - \vec{\mathcal{G}}_j(\vx,\vby) =
  \delta(\vx-\vby) \vec{\bf e}_j, \quad \vx \in W_{\bar{L}}.
\end{equation}
These are like the partial differential equations in
(\ref{eq:Green2}), with $k$ replaced by the imaginary number $i$.
From the analysis in \cite{Kirsc2007a}, which applies to imaginary
wavenumbers like $i$, we obtain that $\cL_o$ is a bounded linear
operator and $\vbu = \cL_o(\vec{\bm f})$ is the weak solution of
$\curl \curl \vbu + \vbu = - \vec{\bm f}$. Explicitly, we have for all
$\vbphi \in H(\curll,W_{\bar{L}})$,
\begin{align}
\Big( \cL_o(\vec{\bm f}),\vbphi \Big)_\icu &= \int_{W_{\bar{L}}} d \vx
\Big[\curl \cL_o(\vec{\bm f}) \cdot\curl \ol\vbphi + \cL_o(\vec{\bm f
    }) \cdot\ol\vbphi\Big] \nonumber \\ &= -\int_D d \vx \, \vec{\bm f
  }\cdot\ol\vbphi- \int_{\pa W_{\bar{L}}} ds \, \Big[\vec{\bm n}
  \times \curl \cL_o(\vec{\bm f})\Big] \cdot \Big[(\vec{\bm n} \times
  \ol\vbphi) \times \vec{\bm n}\Big],
\label{eq:IBP}
\end{align}
where we used the integration by parts result in \cite[Theorem
  3.31]{Monk2003a}.

Using this auxiliary operator we write
\begin{align*}
  \Big( \vbu - \cL(V\vbu),\vbu \Big)_\icu &= \Big( \vbu -
  \cL_o(V\vbu),\vbu \Big)_\icu \hspace{-0.05in} - \Big( (\cL-\cL_o)(V
  \vbu),\vbu)_\icu \nonumber \\ &= \|\vbu\|_\icu^2 - \Big(
  \cL_o(V\vbu),\vbu \Big)_\icu \hspace{-0.05in}- \Big( (\cL-\cL_o)(V
  \vbu),\vbu \Big)_\icu,
\end{align*}
and from (\ref{eq:IBP}) with $\vec{\bm f} = V \vbu$ and $\vbphi =
\vbu$, we get
\begin{align*}
  \Big( \vbu - \cL(V\vbu),\vbu \Big)_\icu =&\|\vbu\|_\icu^2 + \int_D d
  \vx \, (\eps_r(\vx)-I) \vbu \cdot \ol\vbu \\ &\hspace{-0.8in}+ \int_{\pa
    W_{\bar{L}}} ds \, \Big[\vec{\bm n} \times \curl \cL_o(
      V\vbu)\Big] \cdot \Big[(\vec{\bm n} \times \ol\vbu) \times
    \vec{\bm n}\Big] - \Big( (\cL-\cL_o)(V \vbu),\vbu \Big)_\icu.
\end{align*}
Here we used the expression (\ref{eq:defV}) of $V$. Because $\eps_r$
is positive definite by assumption, we conclude that there exists a
positive constant $C$ such that
\begin{align*}
  \|\vbu\|_\icu^2 + \int_D d
  \vx \, (\eps_r(\vx)-I) \vbu \cdot \ol\vbu \ge C \|\vbu\|_\icu^2, \quad
  \forall \, \vbu \in H(\curll,W_{\bar L}).
\end{align*}
Substituting in the equation above and introducing the linear
operators $\mathcal{K}_1$ and $\mathcal{K}_2$ from $H(\curll,W_{\bar
  L})$ to $H(\curll,W_{\bar L})$, defined by
\begin{align}
  \mathcal{K}_1(\vbu) &= (\cL-\cL_o)(\vbu), \label{eq:K1}
  \\ \mathcal{K}_2(\vbu) &= -\int_{\pa W_{\bar{L}}} ds \, \Big[\vec{\bm
      n} \times \curl \cL_o( V\vbu)\Big] \cdot \Big[(\vec{\bm n}
    \times \ol\vbu) \times \vec{\bm n}\Big], \label{eq:K2}
\end{align}
we obtain
\begin{equation}
  \Re \Big(\vbu - \cL(V \vbu) + \mathcal{K}_1(\vbu) +
  \mathcal{K}_2(\vbu),\vbu\Big)_\icu \ge C \|\vbu\|_\icu^2, \quad
  \forall \, \vbu \in H(\curll,W_{\bar L}). \label{eq:K3}
\end{equation}
Result (\ref{eq:Ki2}) follows once we show that $\mathcal{K}_1$ and
$\mathcal{K}_2$ are compact operators.

Since the differences $\vec{G}_j(\vx,\vby) - \frac{e^{i k
    |\vx-\vby|}}{4 \pi |\vx-\vby|}\vec{\bm e}_j$ and
$\vec{\mathcal{G}}_j(\vx,\vby) - \frac{e^{- |\vx-\vby|}}{4 \pi
  |\vx-\vby|}\vec{\bm e}_j$ are analytic, we conclude that the
singularity of the kernel in $\cL-\cL_o$ is as strong as that of
$\Big[\frac{e^{i k |\vx-\vby|}}{4 \pi |\vx-\vby|} - \frac{e^{-
      |\vx-\vby|}}{4 \pi |\vx-\vby|}\Big]I$. Thus, we can use the
results in \cite{Kirsc2007a} to conclude that $\mathcal{K}_1$ is a
compact operator.

To prove that $\mathcal{K}_2$ is compact, let us consider a
neighborhood $\Gamma$ of the boundary $\pa W_{\bar L}$, such that
$\Gamma \subset W_{\bar L}$ and $\Gamma$ does not intersect the
support $D$ of the scattering potential. We define the operator
$\mathcal{T}$ from $(L^2(D))^3$ to $(H^s(\Gamma))^3$,
with $s > 1$, by restricting $\curl \cL_o(\vec{\bm f})$ to $\Gamma$,
for all $\vec{\bm f} \in (L^2(D))^3$,
\begin{equation}
  \mathcal{T}(\vec{\bm f}) = - \int_{D} d \vby\, \nabla \times
  \Big(\vec{\mathcal{G}}_1,\vec{\mathcal{G}}_2,\vec{\mathcal{G}}_3\Big)(\cdot, \vby)
  \vec{\bm f}(\vby), \quad \mbox{in}~ \Gamma.
\end{equation}
This operator is compact because its kernel is an analytic function on
$\Gamma \times D$.  Define also the trace space
\begin{equation*}
  H^{-1/2}_\idiv(\pa W_{\bar L}) = \Big\{ \vec{\bm f} \in
  \Big(H^{-1/2}(\pa W_{\bar L})\Big)^3: ~\exists~ \vbu \in
  H(\curll, W_{\bar L}) ~ \mbox{satisfying}~ \vec{\bm n} \times
  \vbu|_{\pa W_{\bar L}} = \vec{\bm f}\Big\},
\end{equation*}
with norm
\[
\|\vec{\bm f}\|_{H^{-1/2}_\idiv(\pa W_{\bar L})} = \inf_{\vbu \in
  H(\curll, W_{\bar L}), \vec{\bm n} \times \vbu|_{\pa W_{\bar L}} =
  \vec{\bm f}} \|\vbu\|_\icu.
\]
It is shown in \cite[Section 3.5]{Monk2003a} that $H^{-1/2}_\idiv(\pa
W_{\bar L})$ is a Banach space. Due to the compactness of
$\mathcal{T}$, the mapping $\vbu \to \vec{\bm n} \times
\mathcal{T}(V \vbu)|_{\partial W_{\bar L}}$ is a compact operator from
$H(\curll,W_{\bar L})$ to $H^{-1/2}_\idiv(\pa W_{\bar L})$. Note that
the mapping $\vbu \to V \vbu$ is bounded from $H(\curll,W_{\bar L})$
to $(L^2(D))^3$ and $\mathcal{T}(\vbu) \to \vec{\bm n} \times
\mathcal{T}(\vbu)|_{\partial W_{\bar L}}$ is bounded from
$(H^s(D))^3$ to $H^{-1/2}_\idiv(\pa W_{\bar L})$.  We also
have from \cite[Section 3.5]{Monk2003a} that $\vbu \to (\vec{\bm
  n} \times \vbu|_{\pa W_{\bar L}}) \times \vec{\bm n}$ is a
linear bounded mapping from $H(\curll,W_{\bar L})$ to
$H^{-1/2}_\icu(\pa W_{\bar L})$, the dual space of $H^{-1/2}_\idiv(\pa
W_{\bar L})$.

To show that $\mathcal{K}_2$ is compact, let $\{\vbu_j\}$ be a
sequence in $H(\curll,W_{\bar L})$ that converges weakly to $0$, and
prove that $\{\mathcal{K}_2(\vbu_j)\}$ converges strongly to $0$ in
$H(\curll,W_{\bar L})$. Indeed we have
\begin{align*}
  \|\mathcal{K}_2 (\vbu_j)\|_\icu &= \sup_{\vec{\bm v} \in
    H(\curll,W_{\bar L})\setminus\{0\}} \frac{\Big|
    (\mathcal{K}_2(\vbu_j),\vec{\bm v})_\icu \Big|}{\|\vec{\bm
      v}\|_\icu} \nonumber \\ 
      &\leq \sup_{ \vbv\in H(\curll,W_{ \bar
      L})\setminus \{0\}} \frac{ \| \vbn \times \curl
    \cL_o(V\vbu_j)\|_{H^{-1/2}_\idiv(\pa W_{\bar L})} \| \vbn\times
    \vbv \times \vbn \|_{H^{-1/2}_\icu(\pa W_{\bar L})}}{\|
    \vbv\|_\icu} \nonumber \\ 
    &\leq C\| \vbn \times \curl
  \cL_o(V\vbu_j)\|_{H^{-1/2}_{\div}(\pa W_{\bar L})} \nonumber \\ &=
  C\| \vbn \times \mathcal{T}(V\vbu_j) \|_{H^{-1/2}_\idiv(\pa W_{\bar L})} \\
  & \to 0, \qquad \mbox{as} ~ j \to \infty.
\end{align*}
where the first line is a definition, the second line follows by
duality, the third line is due to the boundedness of the mapping $\vbv
\to \vbn \times \vbv \times \vbn$ and the fourth line is by the
definition of $\mathcal{T}$. The convergence to zero is by the
compactness of the mapping $\vbu \to \vec{\bm n} \times \mathcal{T}(V
\vbu)|_{\partial W_{\bar L}}$.

We have now proved the G\r{a}rding inequality (\ref{eq:Gard}), with
$\mathcal{K} = \mathcal{K}_1 + \mathcal{K}_2$. We obtain from it
that $I - \cL(V \cdot)$ is the sum of the coercive operator $I - \cL(V
\cdot) - \mathcal{K}$ and the compact operator $\mathcal{K}$. Thus,
$I-\cL(V\cdot)$ is a Fredholm operator \cite{McLea2000}.
\end{proof}

We conclude the discussion on the solvability of the forward problem
with the remark that when $\eps_r$ is $C^1$, one can extend the
results in \cite{Kirsc2007a} to prove uniqueness of solution of
equation (\ref{eq:Ki2}). The existence of the solution follows from
the Fredholm property.

\section{Data model}
\label{sect:FP4}
Since the array is far away from the support $D$ of the scattering
potential $V$, at coordinate $x_3 = -L$,  the results in
the previous section give
\begin{equation}
\vec{\bm E}^{sc}(\vx) \approx k^2 \int \mathbb{G}^P(\vx,\vby)\, \vec{\bm u}(\vby) \d \vby,
\quad \vx = (\bx,-L).
\label{eq:B01}
\end{equation}
Here
\begin{equation}
\vec{\bm u}(\vby) = V(\vby) \vec{\bm E}(\vby), 
\label{eq:B02}
\end{equation}
is an effective source supported in $D$, representing the wave emitted
by the unknown reflectors illuminated by the field $\vec{\bm E}(\vby)$.  The
approximation in (\ref{eq:B01}) is because we replaced the Green
tensor $\mathbb{G}$ defined in Lemma \ref{lem.2} by its approximation
$\mathbb{G}^P$ which neglects the evanescent waves. Explicitly, if we
denote by $P$ the set of indexes of the propagating modes
\[
P = \{n \in \N_o^2 : \lambda_n < k^2\},
\]
we have
\begin{equation}
\mathbb{G}^P(\vx, \vby)= (\vec{G}^P_1,\vec{G}^P_2,\vec{G}^P_3)(\vx,
\vby) + \frac{1}{k^2} \vec\nabla
\divv(\vec{G}^P_1,\vec{G}^P_2,\vec{G}^P_3)(\vx, \vby),
\label{eq:B2}
\end{equation}
with
\begin{equation}
  \label{eq:B3}
\vec{G}^P_j(\vx, \vby) = \sum_{ n\in P}
\sum_{s=1}^{m_n}\frac{\vec{\bm{e}}_j\cdot
  \vec\Phi^{(s)}_{n}(\bm{y})}{\| \vec\Phi^{(s)}_{n}\|^2}
\big[e^{i\beta_n(y_3+L)} +
  (2\delta_{s3}-1)e^{i\beta_n(L-y_3)}\big]
\frac{\vec\Phi^{(s)}_n(\bx)}{2 i \beta_n},
\end{equation}
where we used that $x_3 = -L$ at the array.

Let us denote by $\mathcal{S}_q$ the linear mapping from the effective
source (\ref{eq:B02}) to the $q-$th component of the scattered field
at the array
\begin{equation}
\big[\mathcal{S}_q(\vec{\bm u})\big](\bx) = k^2\int \vec{\bm e}_q
\cdot \mathbb{G}^P((\bx,-L),\vby)\, \vec{\bm u}(\vby) \d \vby, 1 \le q \le 3. 
\label{eq:B03}
\end{equation}
Since the support of the source (\ref{eq:B02}) is included in $D$, we
may seek to reconstruct the domain $D$ by inverting approximately $\mathcal{S}_q$.
The mapping that takes the scattering potential $V$ to the
measurements is nonlinear, because the scattered field $E^{sc}$ enters
the definition (\ref{eq:B02}). Thus, we linearize it, meaning that we
make the single scattering (Born) approximation
\begin{equation}
\vec{\bm u}(\vby) \approx V(\vby) \vec{\bm E}^o(\vby).
\label{eq:B04}
\end{equation}
We denote by $\mathcal{B}$ the linear mapping from the scattering
potential $V$ to the effective source
\begin{equation}
\big[\mathcal{B}(V)](\vby) = V(\vby) \vec{\bm E}^o(\vby).
\label{eq:B05}
\end{equation}
Then, the forward map $\cF_q$ from the scattering potential $V$ to the
$q-th$ component of the electric field measured at the array is the
composition of the mappings in (\ref{eq:B04}) and (\ref{eq:B05}),
\begin{equation}
\cF_q(V) = \mathcal{S}_q \circ \mathcal{B} (V)
\label{eq:B4}
\end{equation}

The data are denoted by $d_q(\bx)$, for components $q = 1, \ldots, Q$,
with $Q \le 3$, and $\bx \in A$, the aperture of the array, which is
a subset of the waveguide cross-section $\Omega$.

\section{Imaging}
\label{sect:imag}
Let ${\bf d}$ be the data vector, with entries given by $d_q(\bx)$ for
all $\bx$ in $A$ and $q = 1, \ldots, Q$. Let also $\bV$ be the
reflectivity vector consisting of the unknown components of the
scattering potential $V$, discretized in the imaging window $D_I$ that
contains the unknown support $D$. Then, we can state the imaging
problem as finding an approximate solution $\bV$ of the linear system
of equations
\begin{equation}
{\bf d} = \bF \bV.
\label{eq:B5}
\end{equation}
The reflectivity to data matrix $\bF$ is defined by the discretization
of the forward mapping (\ref{eq:B4}).

The system of equations (\ref{eq:B5}) is usually undertermined, so to
find a unique approximation we regularize the inversion by minimizing
either the $\ell_2$ or the $\ell_1$ norm of $\bV$. The first
regularization is related to the reverse time migration approach, as
described in section \ref{sect:TR}.  The imaging with $\ell_1$
minimization is discussed in section \ref{sect:L1}.

\subsection{Reverse time migration}
\label{sect:TR}
The minimum $\ell_2$ norm solution of (\ref{eq:B5}) is 
\begin{equation}
\bV = \bF^\dagger {\bf d},
\label{eq:TR1}
\end{equation}
where $\bF^\dagger$ is the pseudo-inverse of $\bF$. If $\bF$ is full
row rank, $\bF^\dagger = \bF^\star (\bF \bF^\star)^{-1}$, where the
superscript denotes the adjoint. Moreover, if the rows of $\bF$ are
nearly orthogonal, which requires proper placement of the receiver
locations in the array aperture $A$, at distance of the order of the wavelength, matrix
$\bF \bF^\star$ is nearly diagonal, so by replacing $\bF^\dagger$ in
(\ref{eq:TR1}) with $\bF^\star$ we get a similar answer, up to
multiplicative factors. This replacement does not affect
the support of the reconstruction and we denote the result by 
\begin{equation}
\bV^{^{\tiny \mbox{TR}}} = \bF^\star {\bf d},
\label{eq:TR2}
\end{equation}
with superscript TR for ``time reversal".

To explain where time reversal comes in, let us compute the adjoint of the
forward mapping \eqref{eq:B4}. Before discretizing the imaging window
we have
\begin{align*}
\big(\mathcal{F}(V),{\bf d}\big) &= \sum_{q=1}^Q \sum_{\bx \in A}
\big[\mathcal{F}_q(V)\big](\bx) \overline{d_q(\bx)} \nonumber \\ &=
k^2 \sum_{q=1}^Q \sum_{\bx \in A} \int d \vby \, \vec{\bf e}_q \cdot
\mathbb{G}^P((\bx,-L),\vby) V(\vby) \vec{\bm E}^o(\vby) \overline{d_q(\bx)},
\end{align*}
by the definition \eqref{eq:B4} of the forward map and equation \eqref{eq:B03}.
We rewrite this as  
\begin{align}
\big(\mathcal{F}(V),{\bf d}\big) &= \sum_{l=1}^3 \int d \vby \,
\big[V(\vby) \vec{\bm E}^o(\vby)\big]_l \, \Big[k^2 \sum_{q=1}^Q \sum_{\bx
    \in A} \mathbb{G}^P_{lq}(\vby, (\bx,-L))
  \overline{d_q(\bx)}\Big], \label{eq:TR3}
\end{align}
using the Rayleigh-Carson reciprocity relation $\mathbb{G}^P(\vbx,\vby) =\big[ \mathbb{G}^P(\vby,\vx)\big]^T$
of the Green's tensor. 
The last factor, in the square brackets, is the electric field
evaluated at points $\vby$ in the imaging window $D_I$, due to a
source at the array which emits the data recordings $d_q$ reversed in
time. The time reversal is equivalent to complex conjugation in the
Fourier domain. The adjoint of the forward map follows from
(\ref{eq:TR3}),
\begin{align}
\big(\mathcal{F}(V),{\bf d}\big) &= \sum_{l,m=1}^3 \int V_{lm}(\vby)
E^o_m(\vby) \Big[k^2 \sum_{q=1}^Q \sum_{\bx \in A}
  \mathbb{G}^P_{lq}(\vby, (\bx,-L)) \overline{d_q(\bx)}\Big]
\nonumber= \big(V,\cF^\star({\bf d})),
\end{align}
where the inner product in the right hand side is 
\[
\big(V,U\big) = \int d\vby \, \mbox{trace} \big[V(\vby) U(\vby)\big],
\]
for any complex valued matrix $U$. Recall that $V(\vby)$ is Hermitian.
Thus, $\cF^\star(\bd)$ is a $3 \times 3$ complex matrix valued field,
with components 
\begin{equation}
\big[\cF^\star(\bd)\big]_{ml}(\vby) = \Big[k^2 \sum_{q=1}^Q
  \sum_{\bx \in A} \mathbb{G}^P_{lq}(\vby, (\bx,-L))
  \overline{d_q(\bx)}\Big]E_m^o(\vby).
\label{eq:TR4}
\end{equation}
The right hand side in the imaging formula (\ref{eq:TR2}) is the discretization of  (\ref{eq:TR4}) over points $\vby$ in the imaging
window. 

In the particular case of a diagonal scattering potential $V(\vby)$,
which corresponds to the coordinate axes being the same as the
principal axes of the dielectric material in the support of the
reflectors, 
the adjoint operator acts from the data space to the space of diagonal,
positive definite matrices. The reconstruction is given by 
\begin{equation}
V^{^{\tiny \mbox{TR}}}_{ll}(\vby) = \Big[k^2 \sum_{q=1}^Q
  \sum_{\bx \in A} \mathbb{G}^P_{lq}(\vby, (\bx,-L))
  \overline{d_q(\bx)}\Big]E_l^o(\vby),
\label{eq:TR5}
\end{equation}
where $\vby$ are the discretization points in $D_I$.  Moreover, if the
material is isotropic, so that $V$ is a multiple of the identity, the
reconstruction is $V^{^{\tiny \mbox{TR}}} I$, with
\begin{equation}
V^{^{\tiny \mbox{TR}}}(\vby) = \sum_{l=1}^3 \Big[k^2
  \sum_{q=1}^Q \sum_{\bx \in A} \mathbb{G}^P_{lq}(\vby, (\bx,-L))
  \overline{d_q(\bx)}\Big]E_l^o(\vby).
\label{eq:TR6}
\end{equation}
None of these formulae are quantitative approximations of $V$, so we
may drop the factor $k^2$ and display their absolute values at points $\vby$ in the imaging window $D_I$. 
The estimate of the 
support $D$ of $V$ is given by the subset in $D_I$ where the displayed values are  large.

\subsection{Imaging with $\ell_1$ optimization}
\label{sect:L1}
To incorporate the prior information that the reflectors have small
support in the imaging window, we may reconstruct the scattering
potential using $\ell_1$ optimization. This means solving the
optimization problem 
\begin{equation}
\min \|\bV\|_{\ell_1} \quad \mbox{such that} \quad \bd = \bF \bV.
\label{eq:L1}
\end{equation}
The equality constraint may be replaced by the inequality $\|\bd - \bF
\bV\|_{\ell_2}^2 \le $ some user defined tolerance, which deals better
with measurement and modeling noise. The $\ell_1$ optimization is carried with the  
cvx package  ``http://cvxr.com/cvx/".

\section{Numerical simulations}\label{sect:num}

We present in this section examples of reconstructions of   the reflectors  with 
reverse time migration and $\ell_1$ optimization. The simulations are for a waveguide
with cross-section  $\Omega =\big(0,13.9\lambda)\times(0,14.2\lambda\big)$, and the array is at 
distance  $L = 41.8 \lambda$ from the end wall, where $\lambda$ is the wavelength.
The source density  in~\eqref{eq:Maxwell1} is 
\begin{equation}
\label{eq:JSource}
\vbJ(\bx) =  \vec{p}\, \delta\Big(\bx - (6.95,7.1) \lambda\Big),
\end{equation}
for constant vector $\vec{p}$, 
and the receiver sensors are located at uniform spacing of  approximately $\lambda/18$ in the array aperture $A$.
We present results with full aperture, where $A = \Omega$ and with $75\%$ aperture, where 
$A \subset \Omega$ is a rectangle of sides $10.5 \lambda$ and $10.65 \lambda$,
with center at the waveguide axis. The receivers measure only the  $2-$nd component  of $\vec{\bm E}^{sc}$ .
We compared the results with those obtained from all components of $\vec{\bm E}^{sc}$ at the array, 
and the images were essentially the same. 

The images displayed in Figures \ref{fi:2}--\ref{fi:5} are obtained with an approximation of the formulae 
in section \ref{sect:imag}, where only a subset of the $648$ propagating modes  are used. This is because 
in practice the sensors record over a finite time window, and only the modes that propagate fast enough 
to arrive at the array during the duration of the measurements contribute. The polarization vector $\vec{p}$
in \eqref{eq:JSource} equals $(0,1,0)^T$ in the simulations with isotropic permittivity and $(1,1,1)^T$ 
in the case of anisotropic permittivity. 

 \begin{figure}[h!!t!!!b!!!!]
\centering
{\includegraphics[width=6cm]{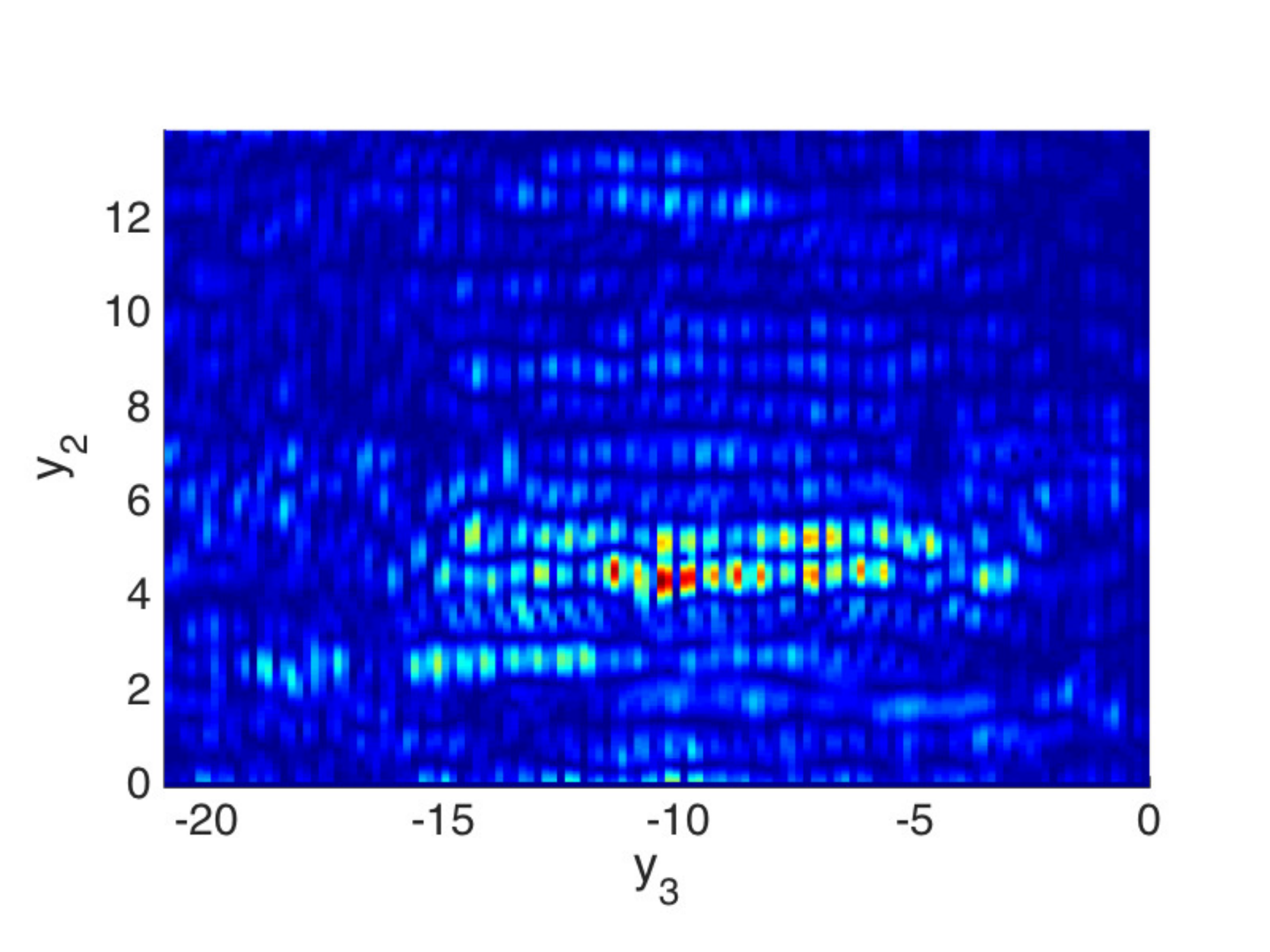}}
\hspace{0cm}
{\includegraphics[width=6cm]{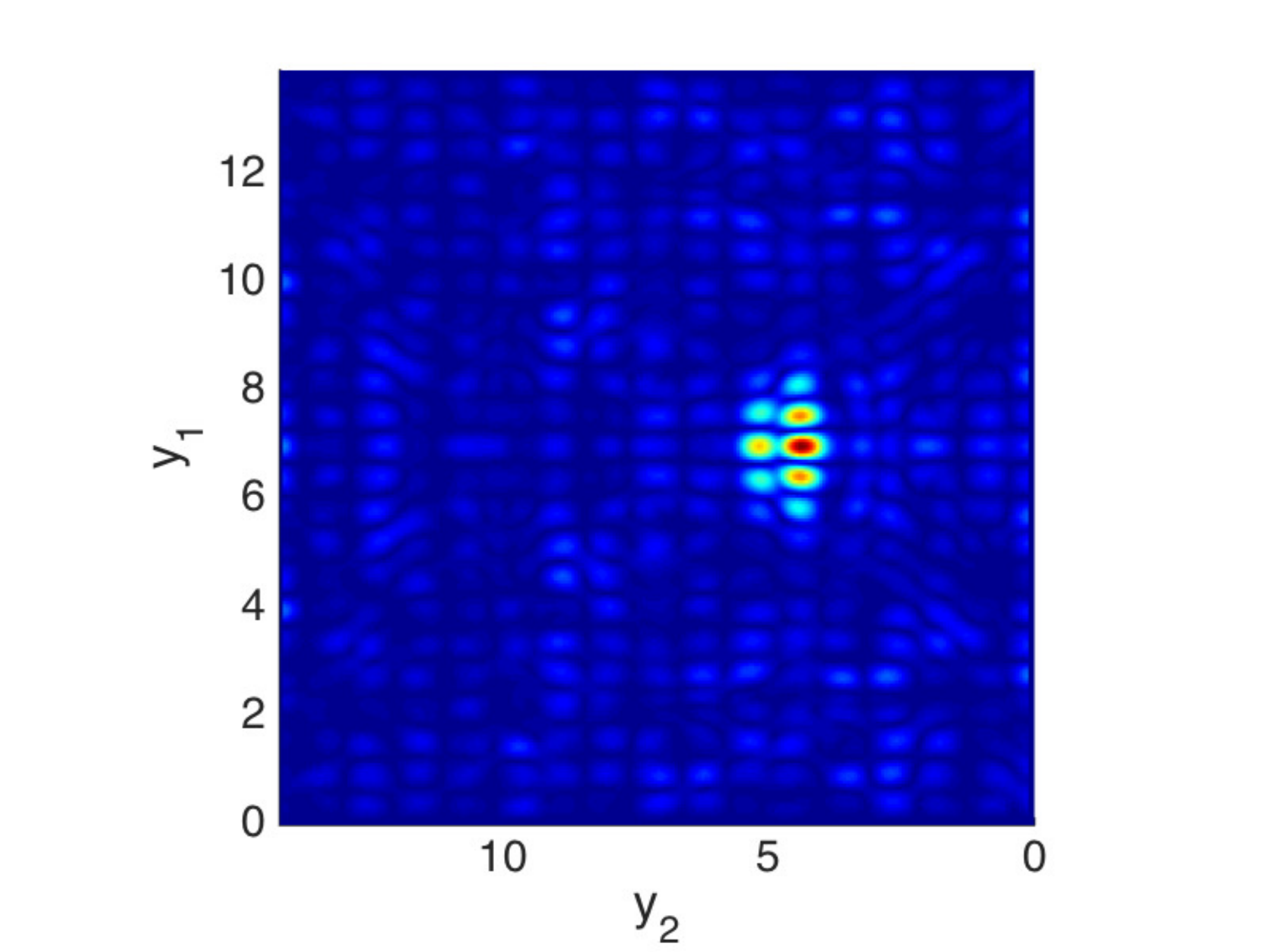}}\\
{\includegraphics[width=6cm]{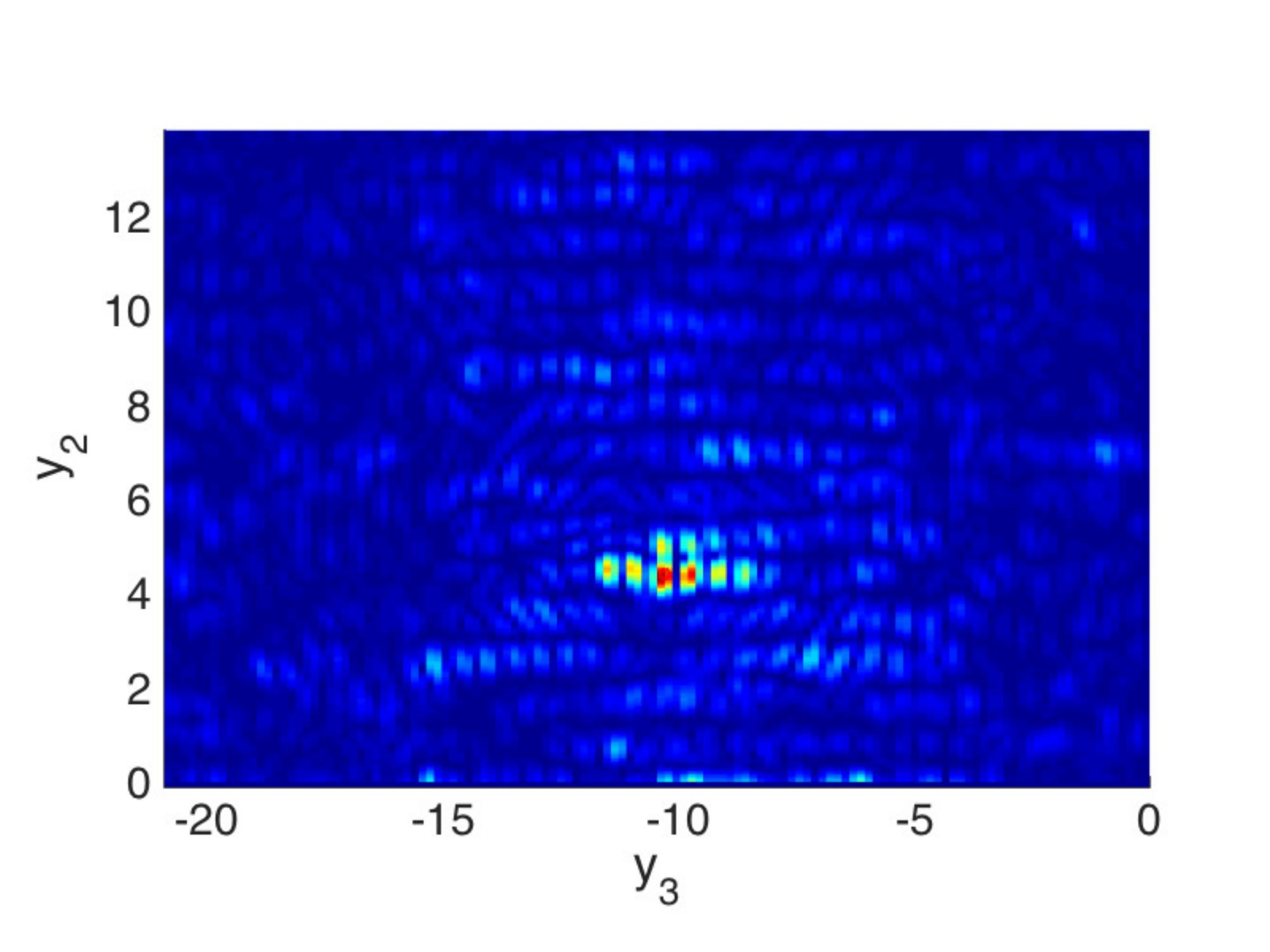}}
\hspace{0cm}
{\includegraphics[width=6cm]{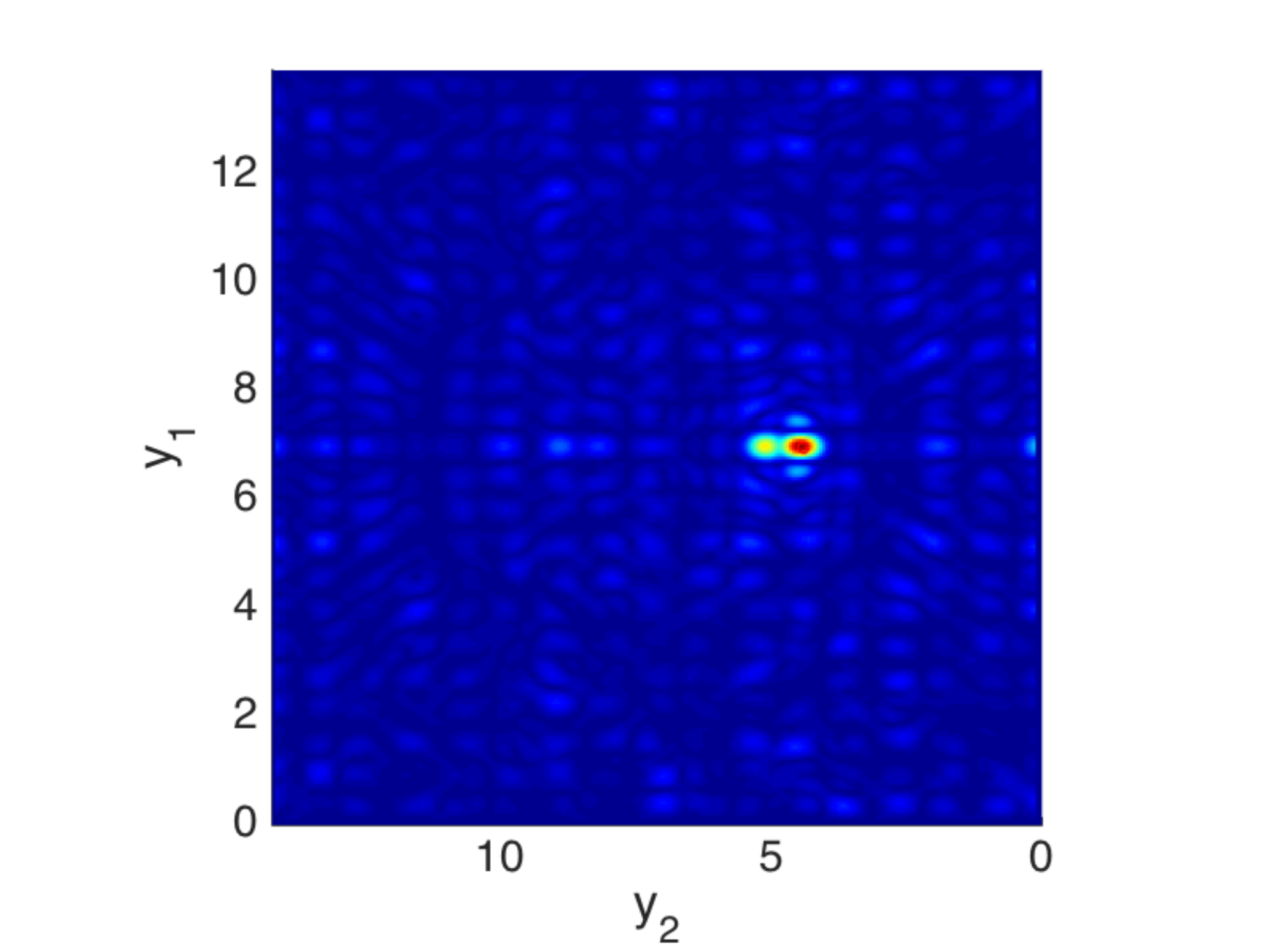}}\\
{\includegraphics[width=6cm]{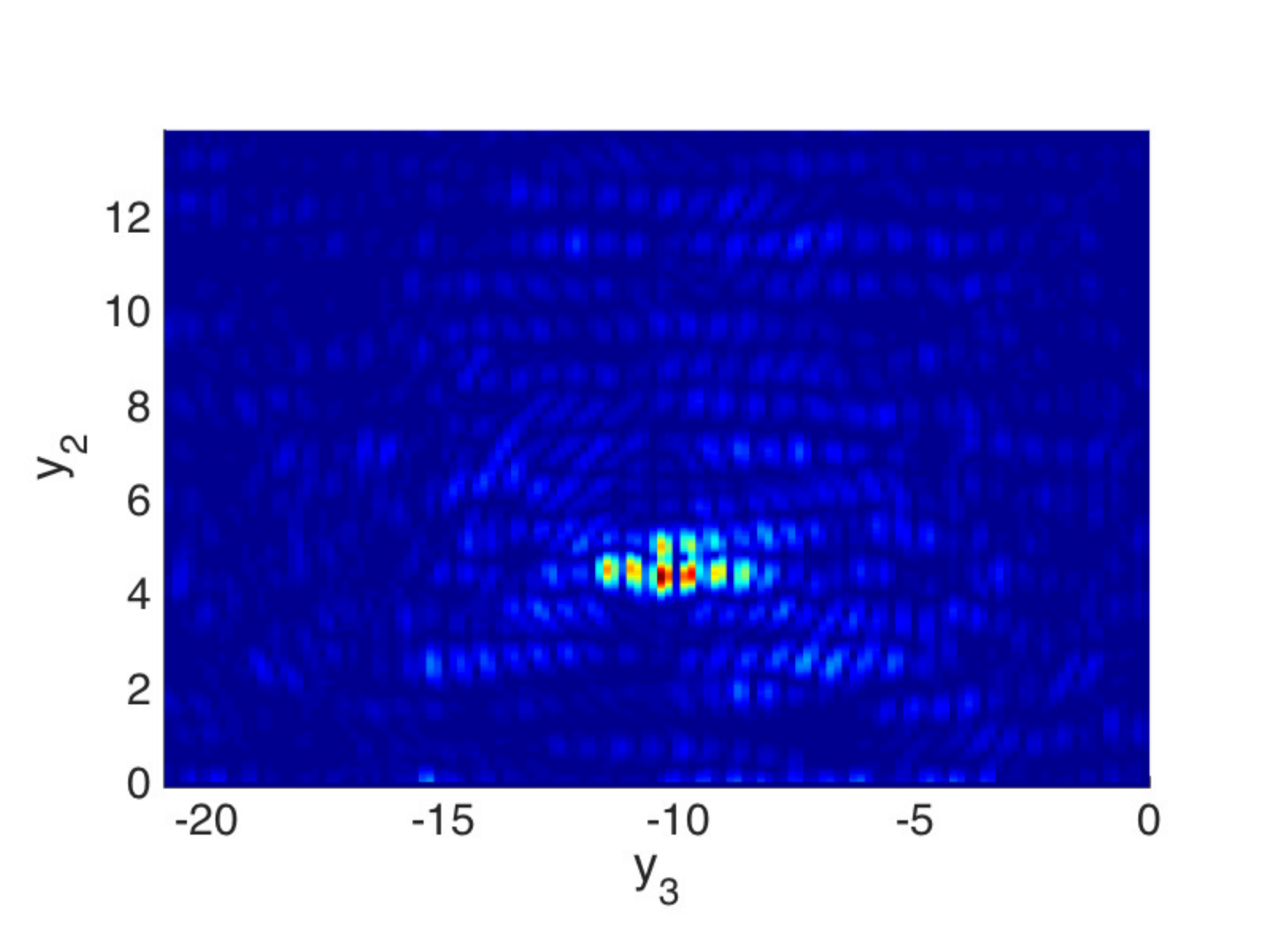}}
\hspace{0cm}
{\includegraphics[width=6cm]{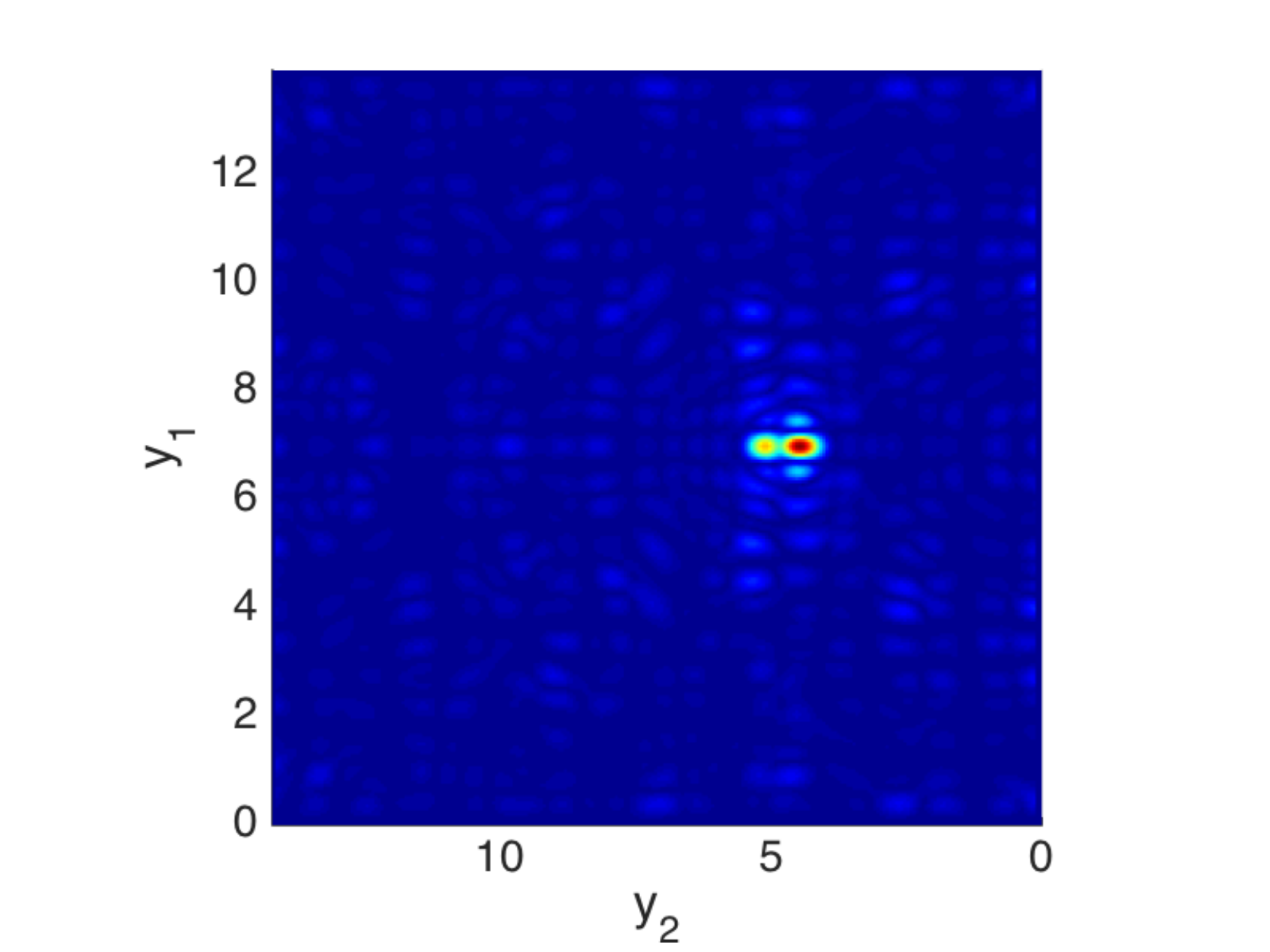}}
\caption{Reverse time migration images of a  point-like reflector located at   $(6.95,4.73,-10.44)\lambda$. The images in the first two rows are with $75\%$ aperture and those in the last row with the full 
aperture. The first row is for $100$ modes and the other two rows for $350$ modes. We display in the 
left column the images in the plane  $y_1 = 6.95\lambda$, and in the right column 
the images in the cross-range plane $y_3 = -10.44 \lambda$.  The axes are in units of $\lambda$.}
 \label{fi:2}
\end{figure}

In figure \ref{fi:2} we display the reverse time migration image of a point-like reflector
located at $(6.95,4.73,-10.44)\lambda$,  modeled by an isotropic scattering potential $V = v(\vby)I$ supported 
on a mesh cell in the imaging region. The mesh size is  $\lambda/18$ in cross-range plane and  $\lambda/6$ in range. We note that the reflector
 is well localized in range and cross-range, and the results improve, as expected when more modes 
 are used to form the image. Moreover, the image at $75\%$ aperture is almost as good as that with full 
 aperture. Naturally, the image deteriorates for smaller apertures.
 
The images of the same reflector obtained with $\ell_1$ optimization are shown in Figure
\ref{fi:4p}. They are obtained with the first $350$  arriving modes and a $75\%$ aperture. The discretization of these images is in steps of  $0.29\lambda$ in cross-range and $ 0.87 \lambda$ in range. As expected, 
these images give a sharper estimate of the support, in the sense that the spurious faint peaks 
in Figure \ref{fi:2} are suppressed in Figure \ref{fi:4p} by the sparsity promoting optimization.
  \begin{figure}[h!!t!!!b!!!!]
\centering
\hspace{-0.3in}\includegraphics[width=5.5cm]{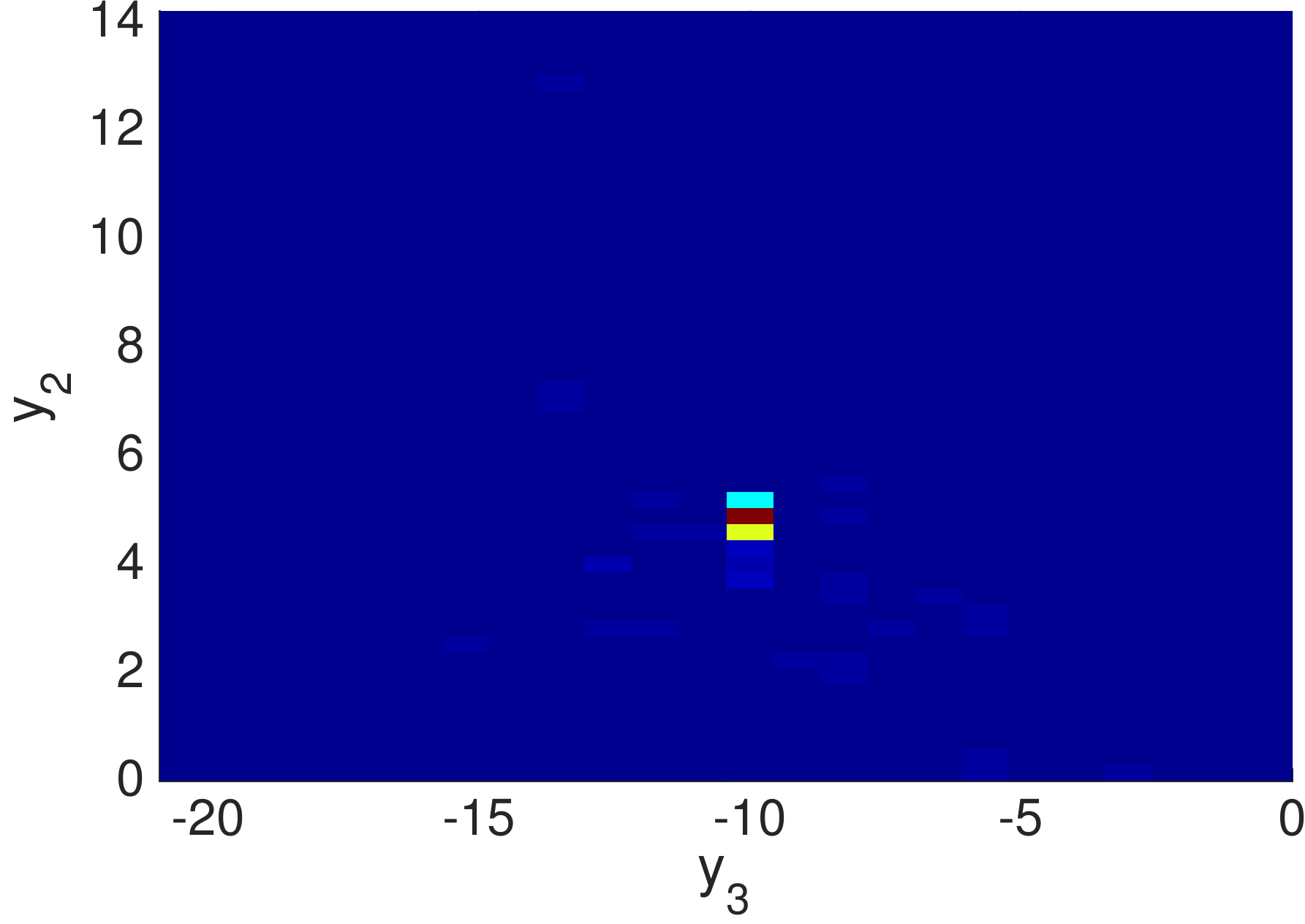}
\hspace{1cm}
\includegraphics[width=4.cm]{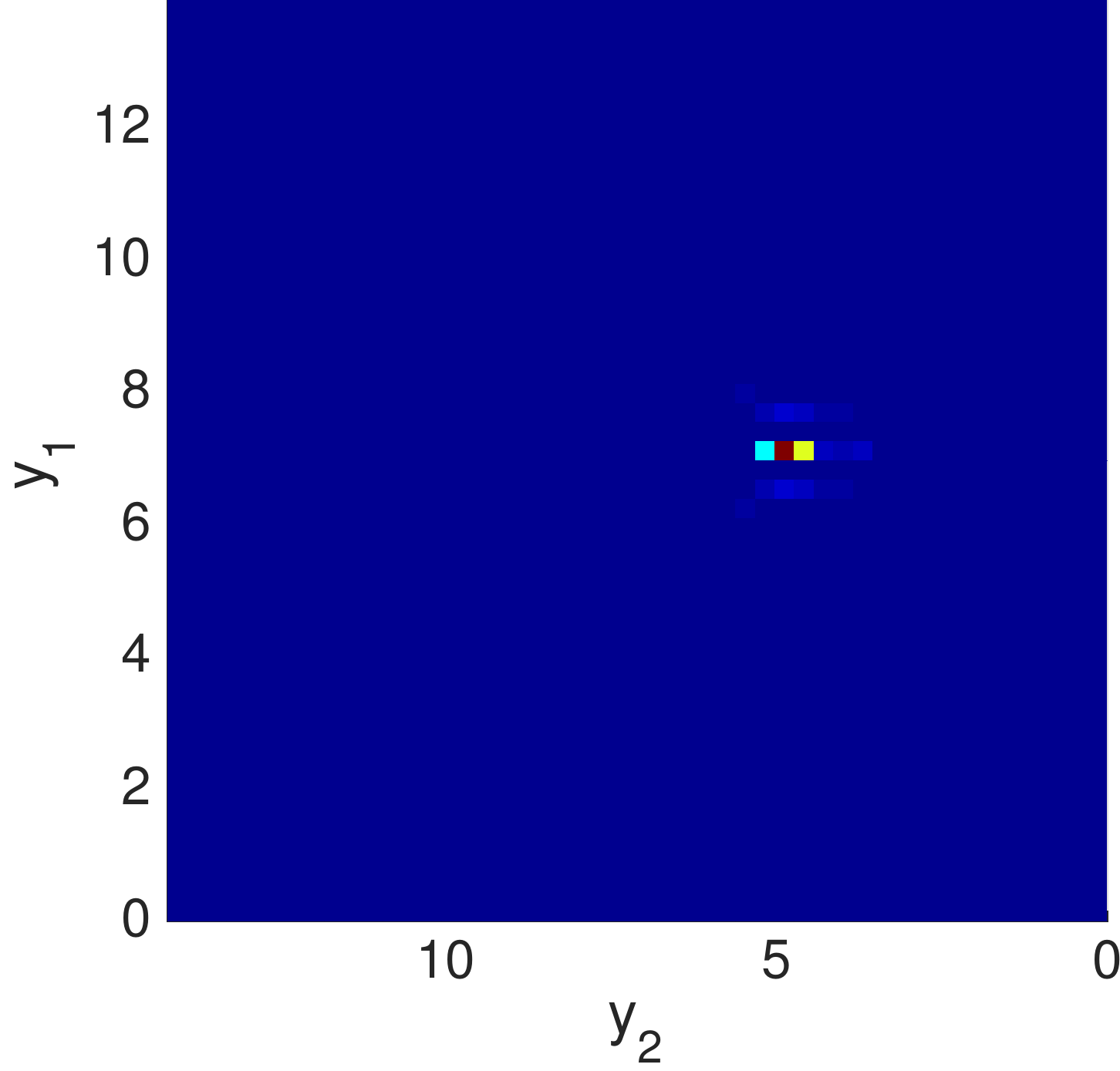}
\caption{Reconstructions of the same reflector as in Figure \ref{fi:2}  using $\ell_1$ optimization,
$350$ modes and $75\%$ aperture. The axes are in units of $\lambda$.
}
\label{fi:4p}
\end{figure}

 \begin{figure}[h!!t!!!b!!!!]
\centering
{\includegraphics[width=6cm]{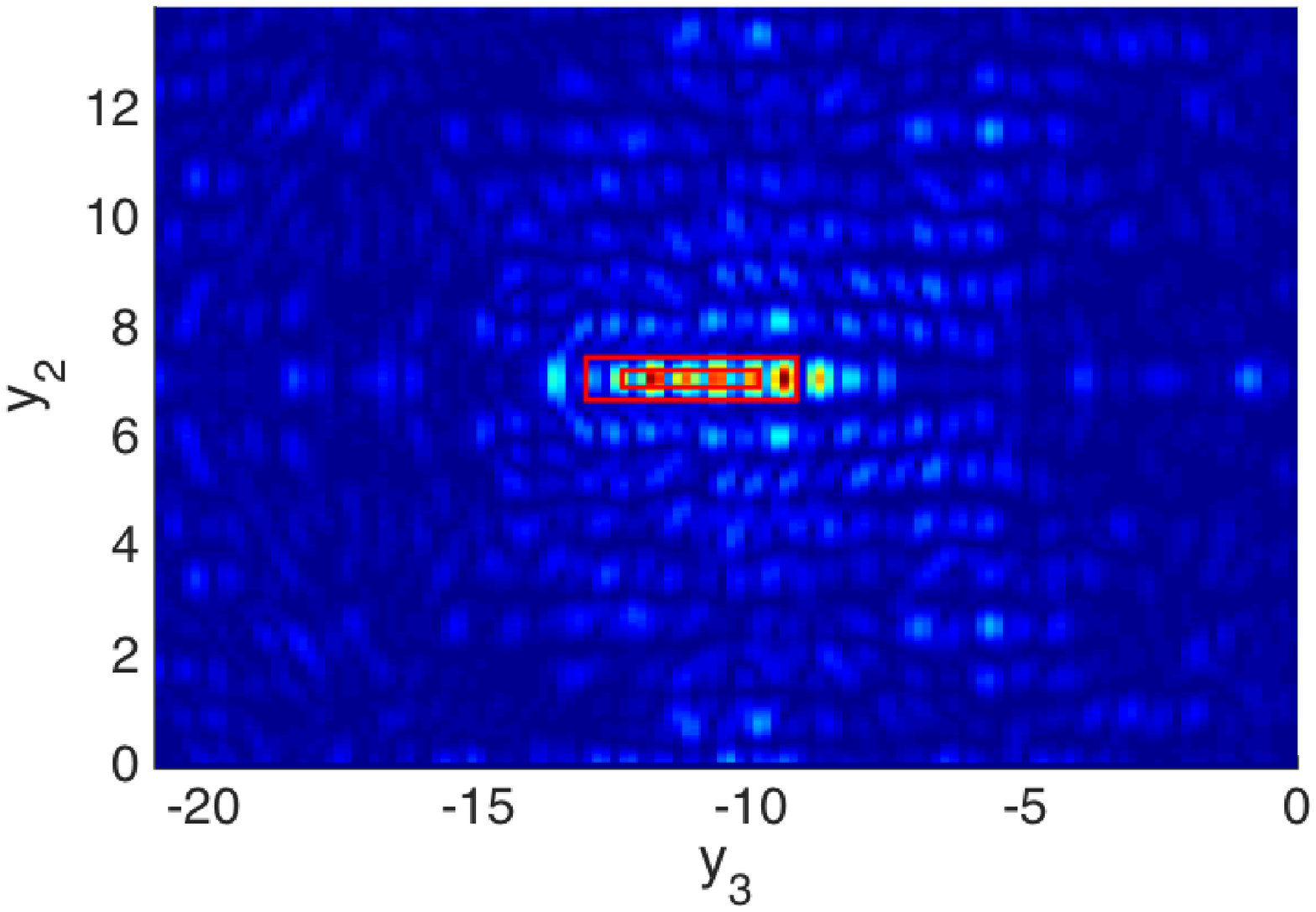}}
{\includegraphics[width=6cm]{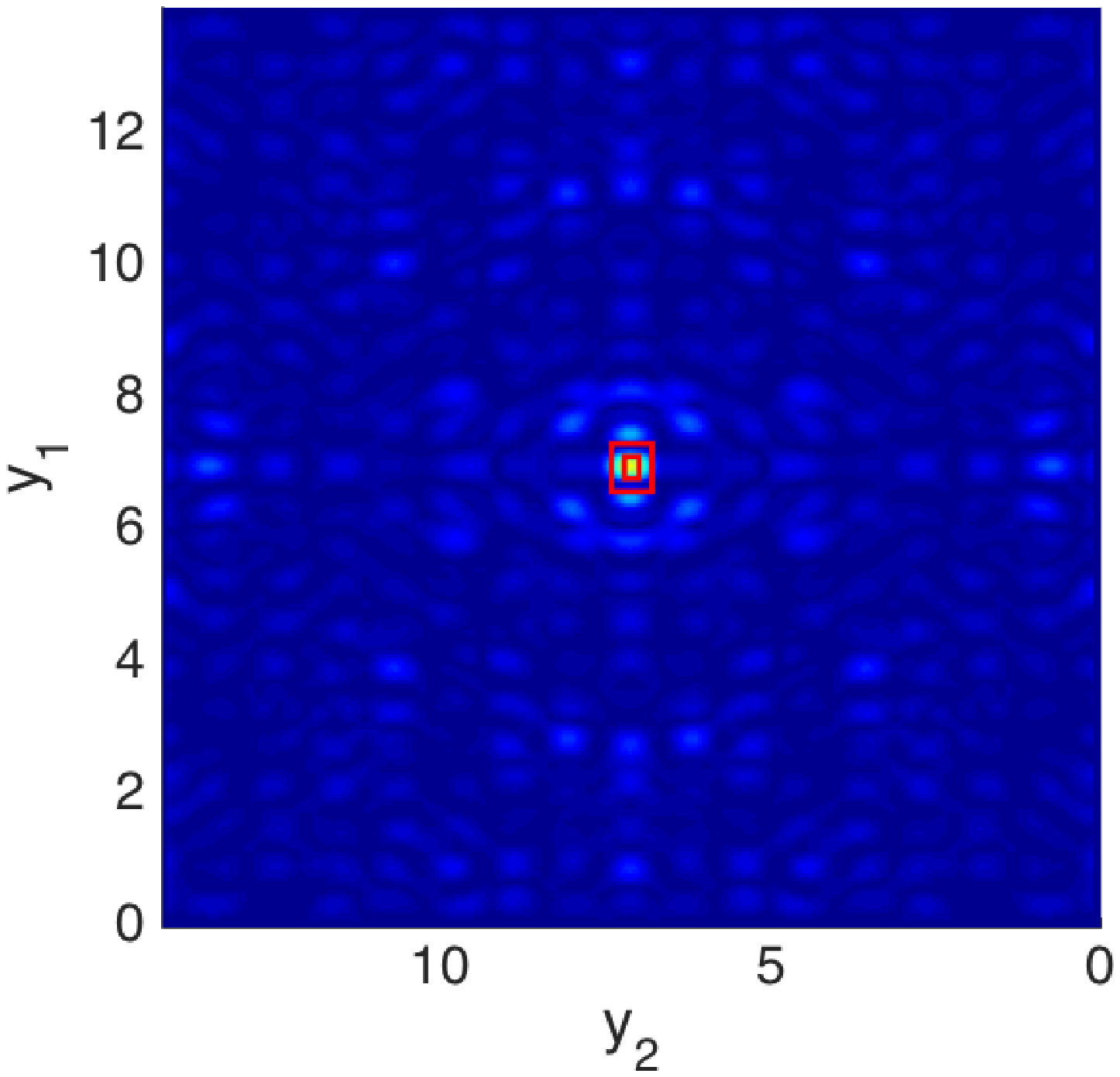}}\\
{\includegraphics[width=6cm]{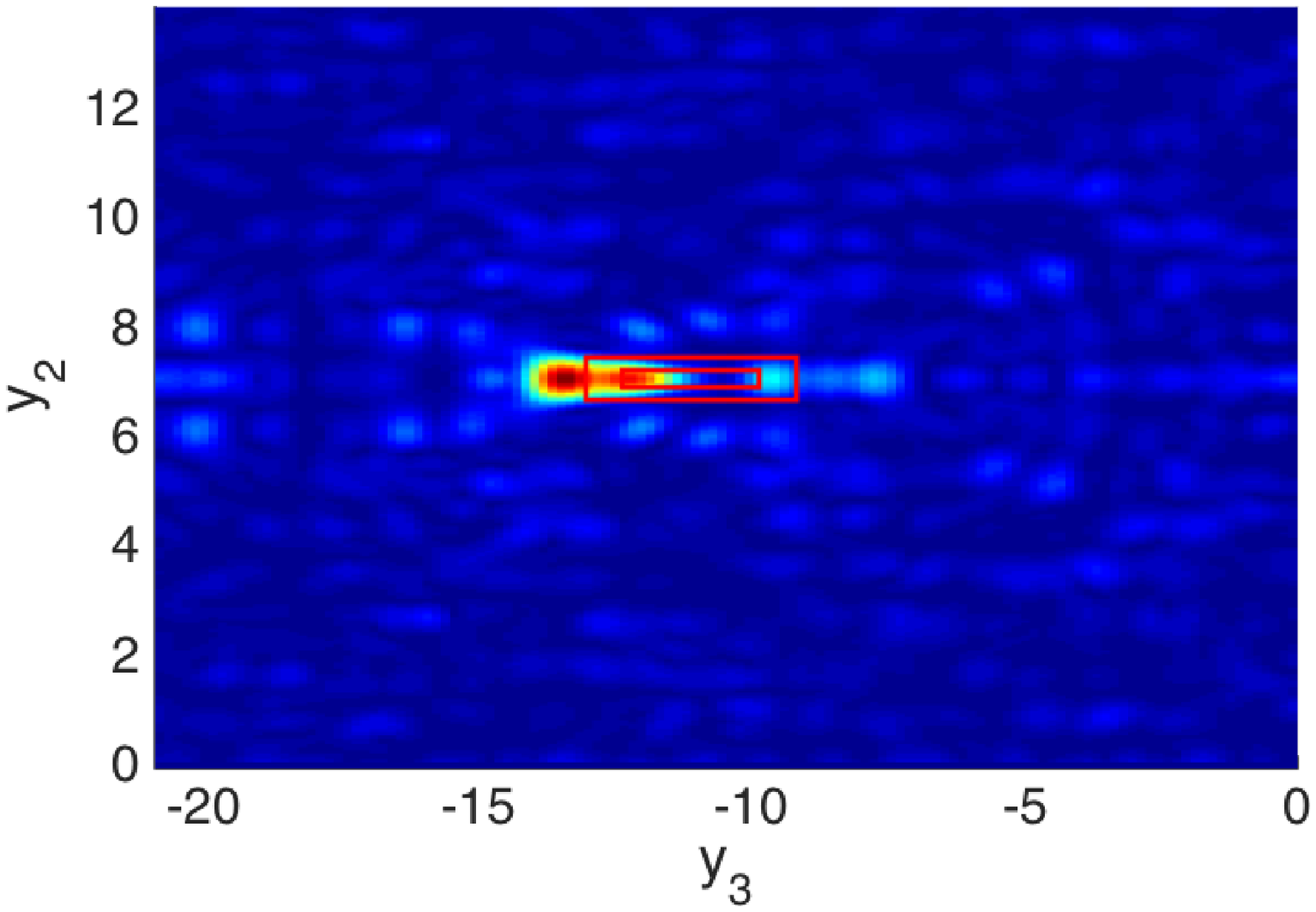}}
{\includegraphics[width=6cm]{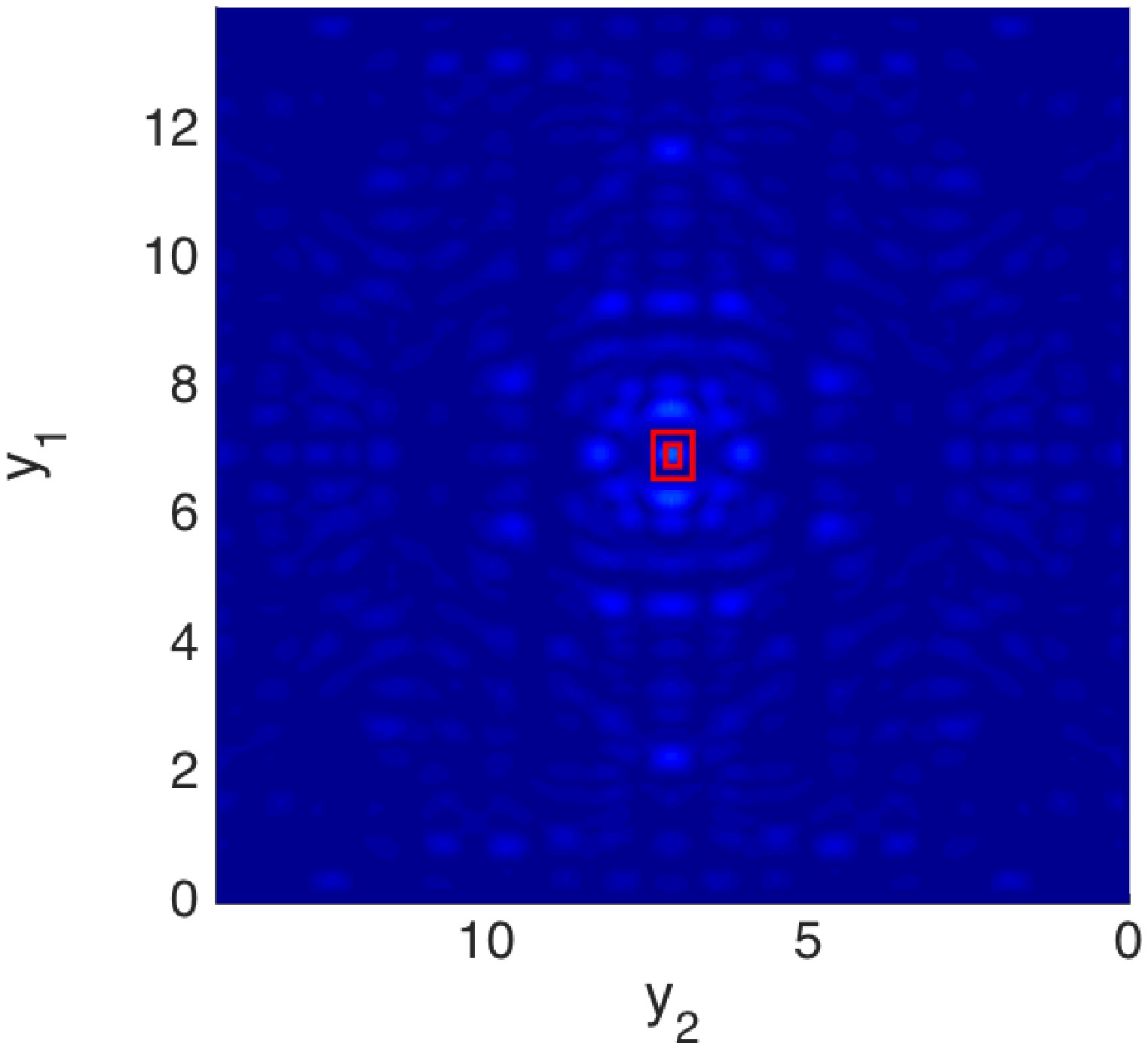}}\\
\caption{Reverse time migration images of a rectangular shell. The results in the first row are in the terminated waveguide. Those in the second row are in an infinite waveguide. We use the $350$ first arriving modes, $75\% $ aperture. The images in the left column are in the plane $y_1 = 6.96\lambda$ and in the right column 
in the cross-range plane at $y_3 = -11.14\lambda$.}
\label{fi:3}
\end{figure}

In Figures \ref{fi:3} and \ref{fi:5} we show images of an extended reflector shaped like 
a rectangular shell of \textcolor{black}{sides equal to $1.16 \lambda, 1.18\lambda$ and $3.9 \lambda$. 
The shell is modeled as $R\setminus R_o$, where $R = (6.38\lambda,7.54\lambda)\times(6.51\lambda,7.69\lambda)\times(-13.09\lambda,-9.19\lambda)$ and $R_o = (6.09\lambda,7.25\lambda)\times(6.22\lambda,7.40\lambda)\times(-12.22\lambda,-8.32\lambda)$.
The scattering potential equals 8 in $R\setminus R_o$  and zero outside, while the thickness of the cross-range  and range walls 
are $ 0.87 \lambda$ and $0.29\lambda$, respectively.}
The discretization of the imaging window  in Figure \ref{fi:3} is the same as in Figure \ref{fi:2}. We note that the reverse time migration method 
estimates better the support of the reflector, specially its back, in the terminating waveguide (top left image) than 
in the infinite waveguide (bottom left image). The  $\ell_1$ optimization images are in Figure  \ref{fi:5}, 
where the discretization of the imaging window is the same as in Figure \ref{fi:4p}.

 \begin{figure}[h!!t!!!b!!!!]
\centering
{\includegraphics[width=6cm]{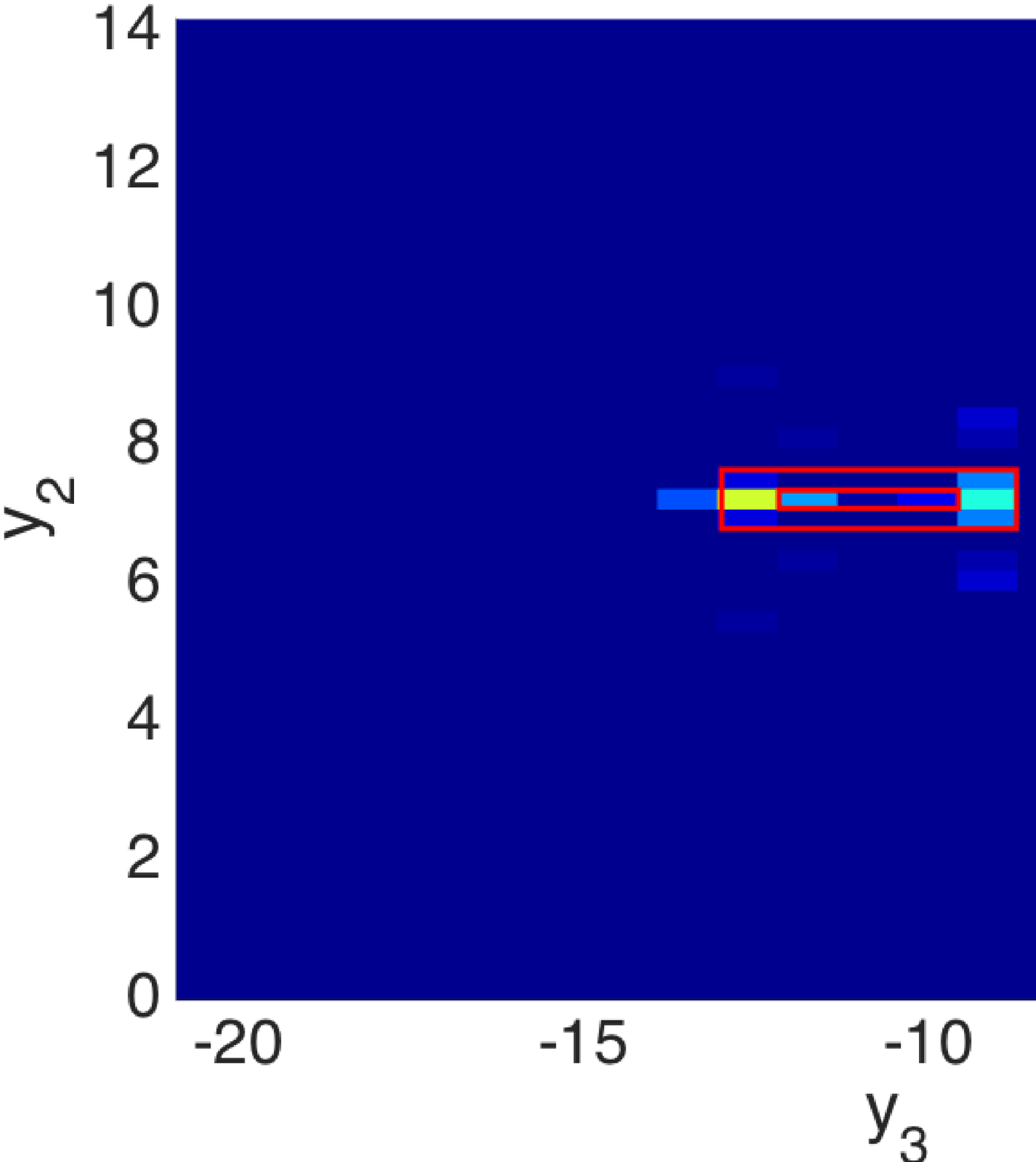}} 
{\includegraphics[width=6cm]{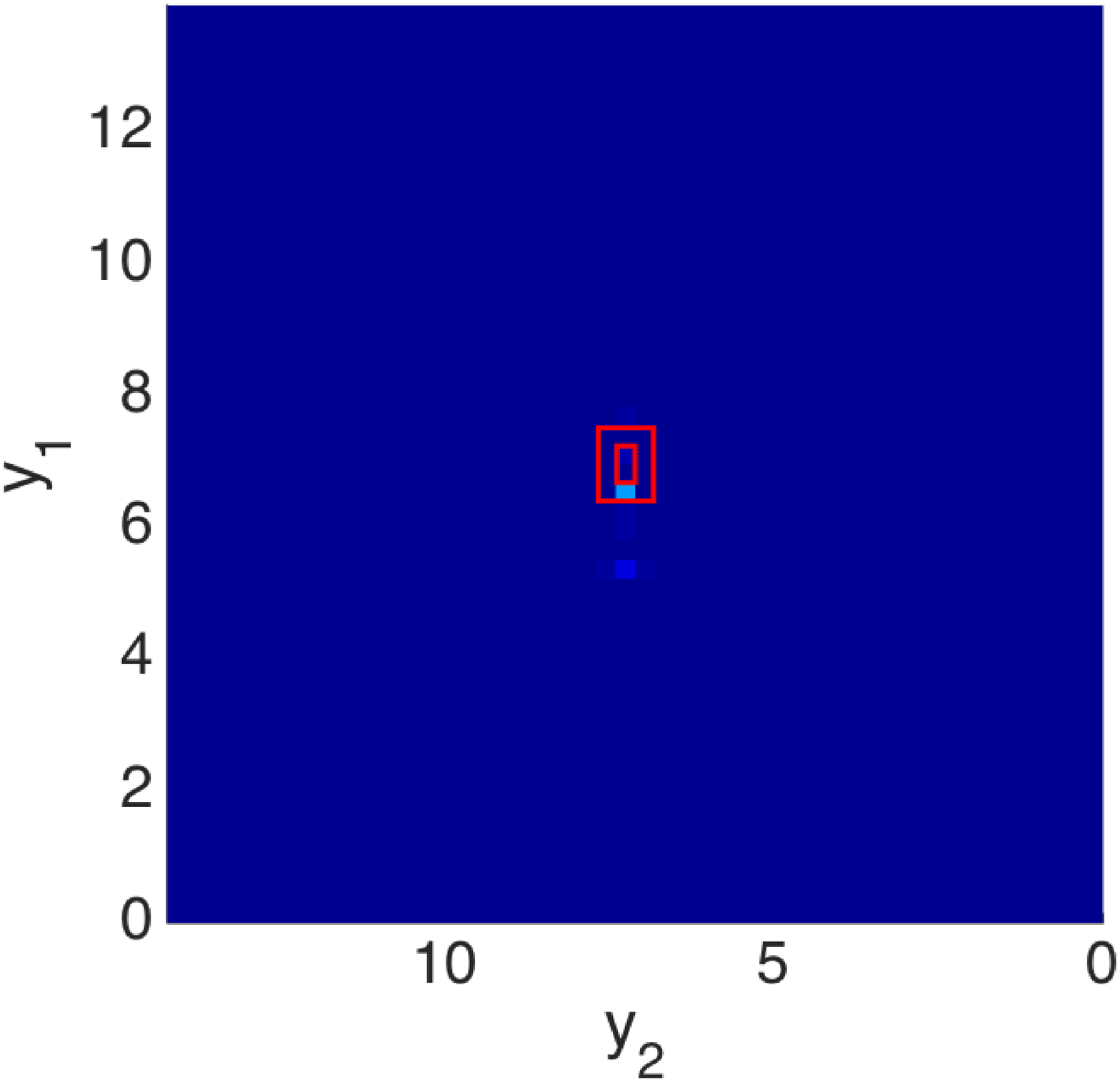}}\\
{\includegraphics[width=6cm]{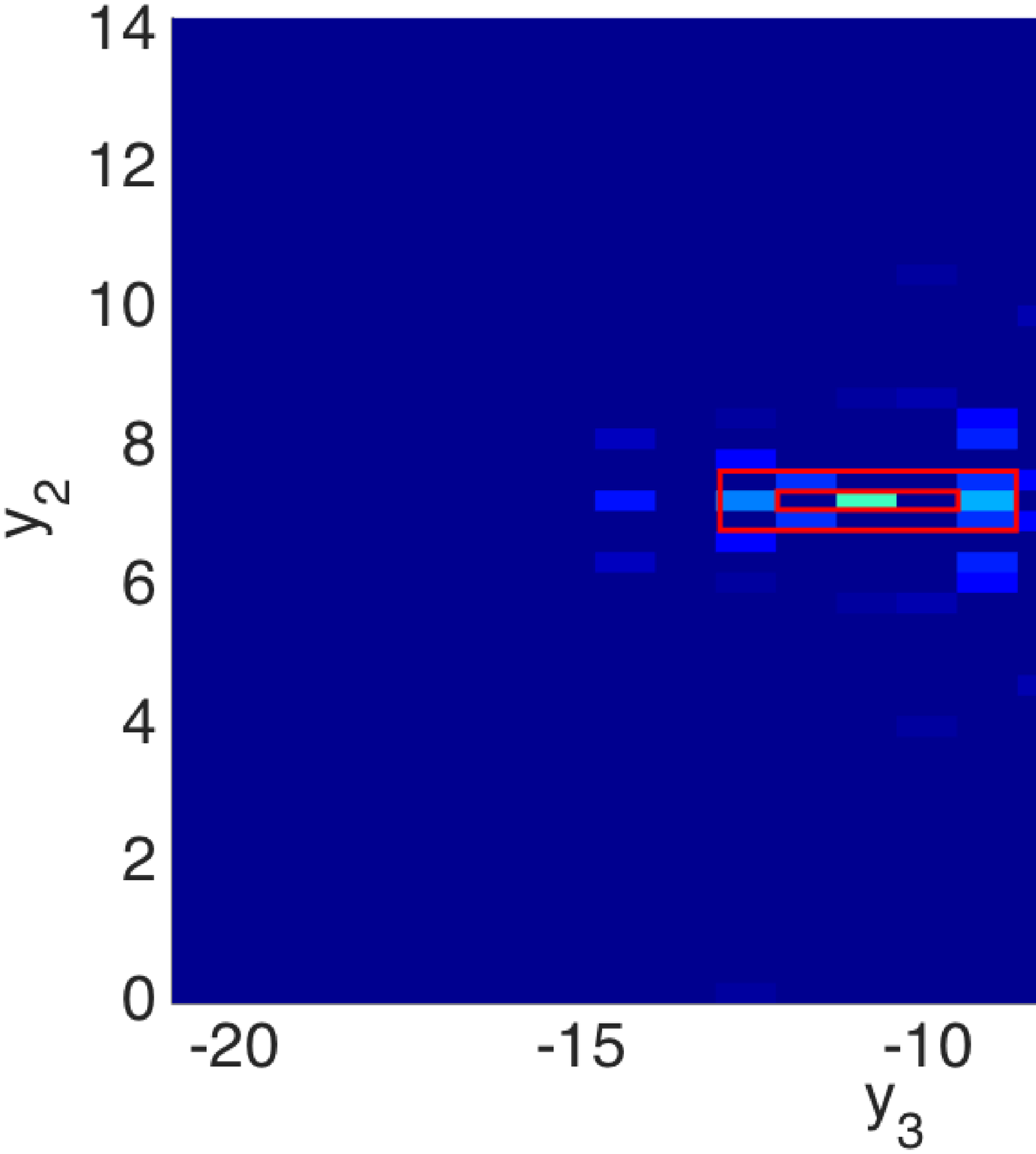}} 
{\includegraphics[width=6cm]{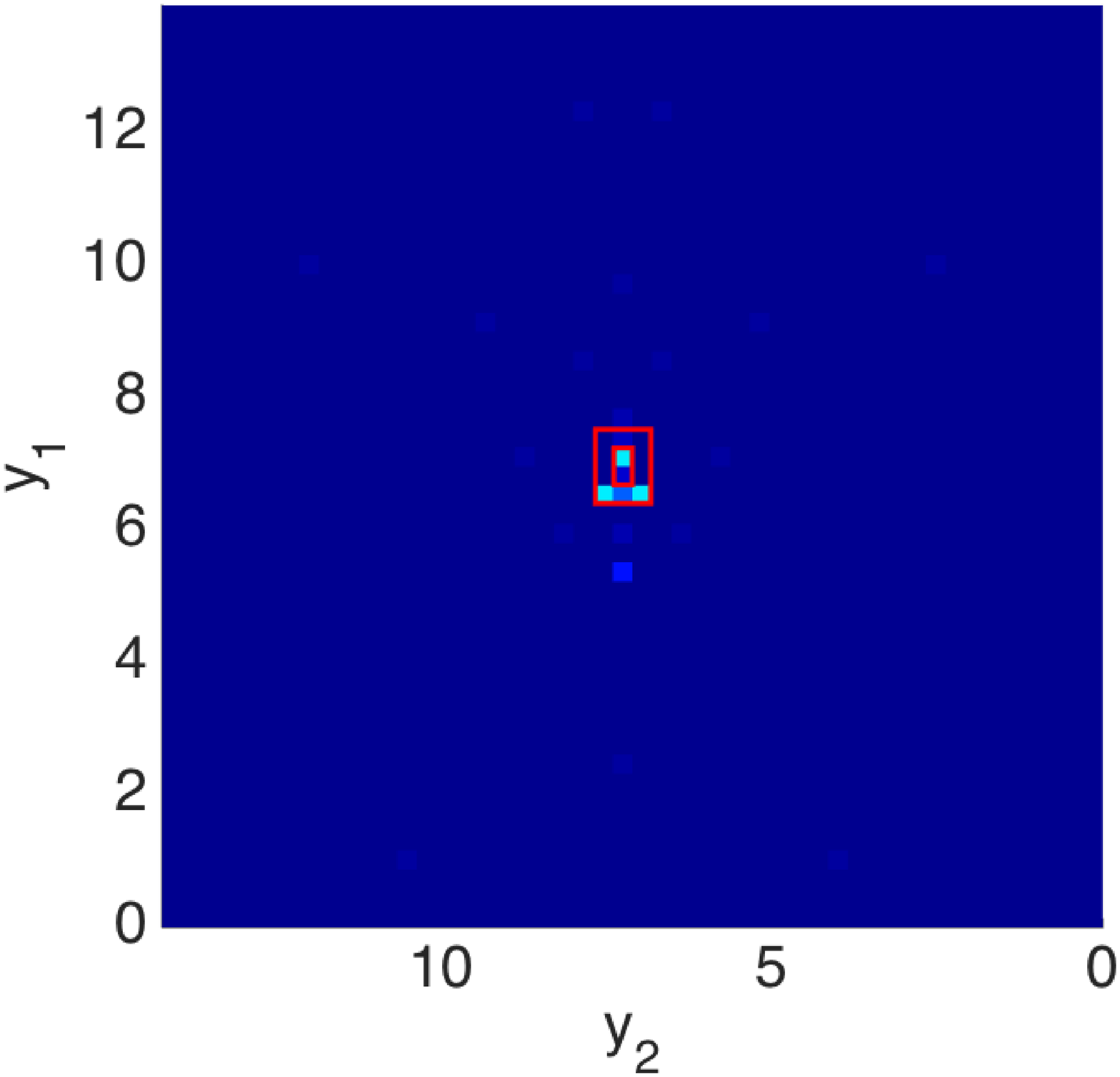}} 
\caption{Reconstructions of the same rectangle shell as in Figure \ref{fi:3}, using $\ell_1$ optimization, 350  modes  and 75\% aperture. 
In the top line we show the images in the terminating waveguide  and in the bottom line those 
in the  infinite waveguide. The images in the left column are in the plane $y_1 = 6.96\lambda$ and in the right column 
in the cross-range plane at $y_3 = -11.14\lambda$.}
\label{fi:5}
\end{figure}

In the last simulations in Figure \ref{fi:4} we present the images of an anisotropic point-like 
reflector, whose scattering potential is a diagonal matrix 
$V(\vby) = \mbox{diag} \big(3,1,5)v(\vby), $ with the same $v(\vby)$ as in Figure \ref{fi:2}.
We present only reverse time migration images and note that the estimates of the support of the
components of $V(\vby)$ are similar to those in Figure \ref{fi:2}. Specifically, we plot the 
absolute value of the right hand side of equation (\ref{eq:TR5}) for $l = 1, 2, 3$.

 \begin{figure}[h!!t!!!b!!!!]
\centering
{\includegraphics[width=6cm]{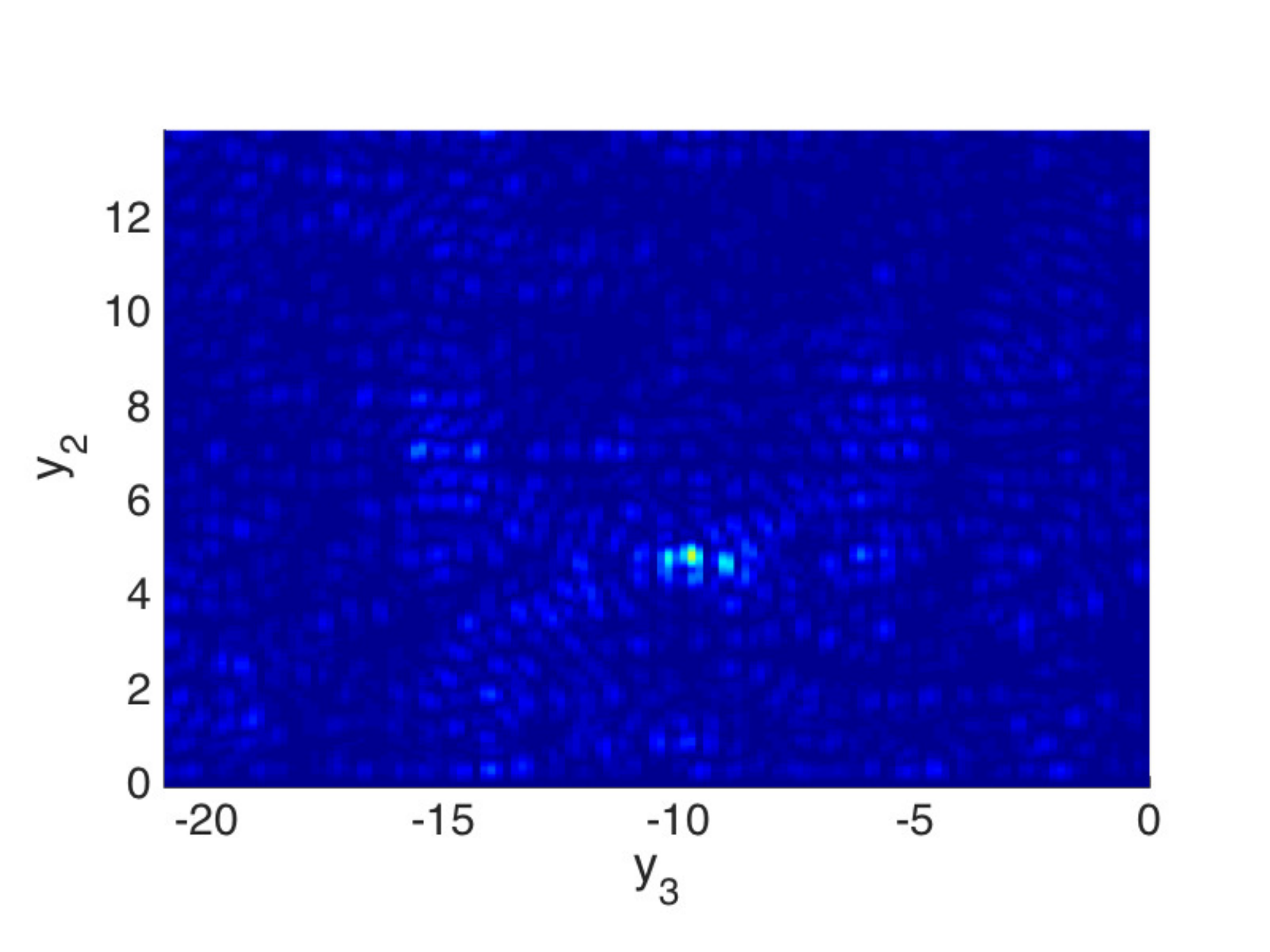}}
{\includegraphics[width=6cm]{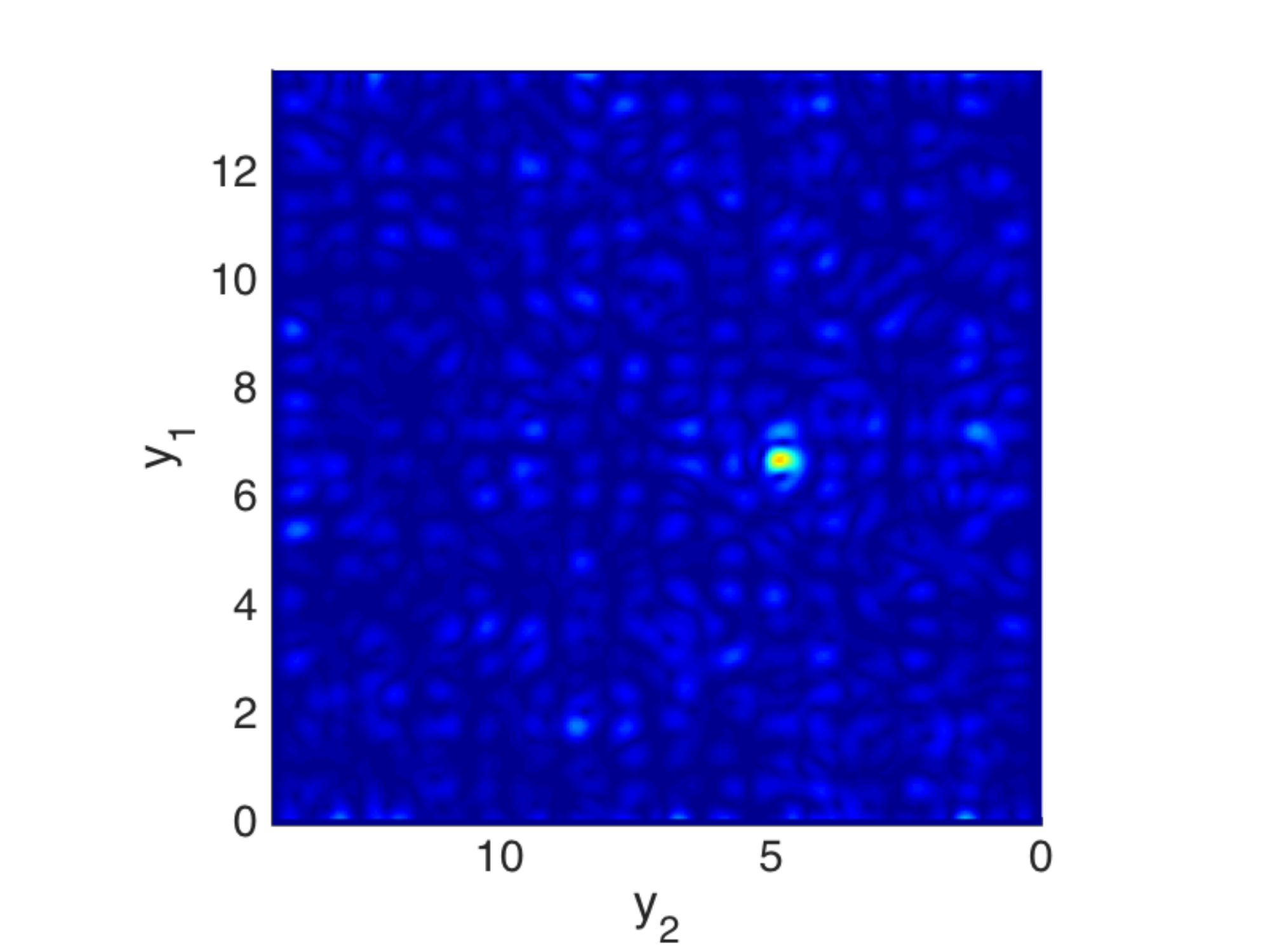}}\\
{\includegraphics[width=6cm]{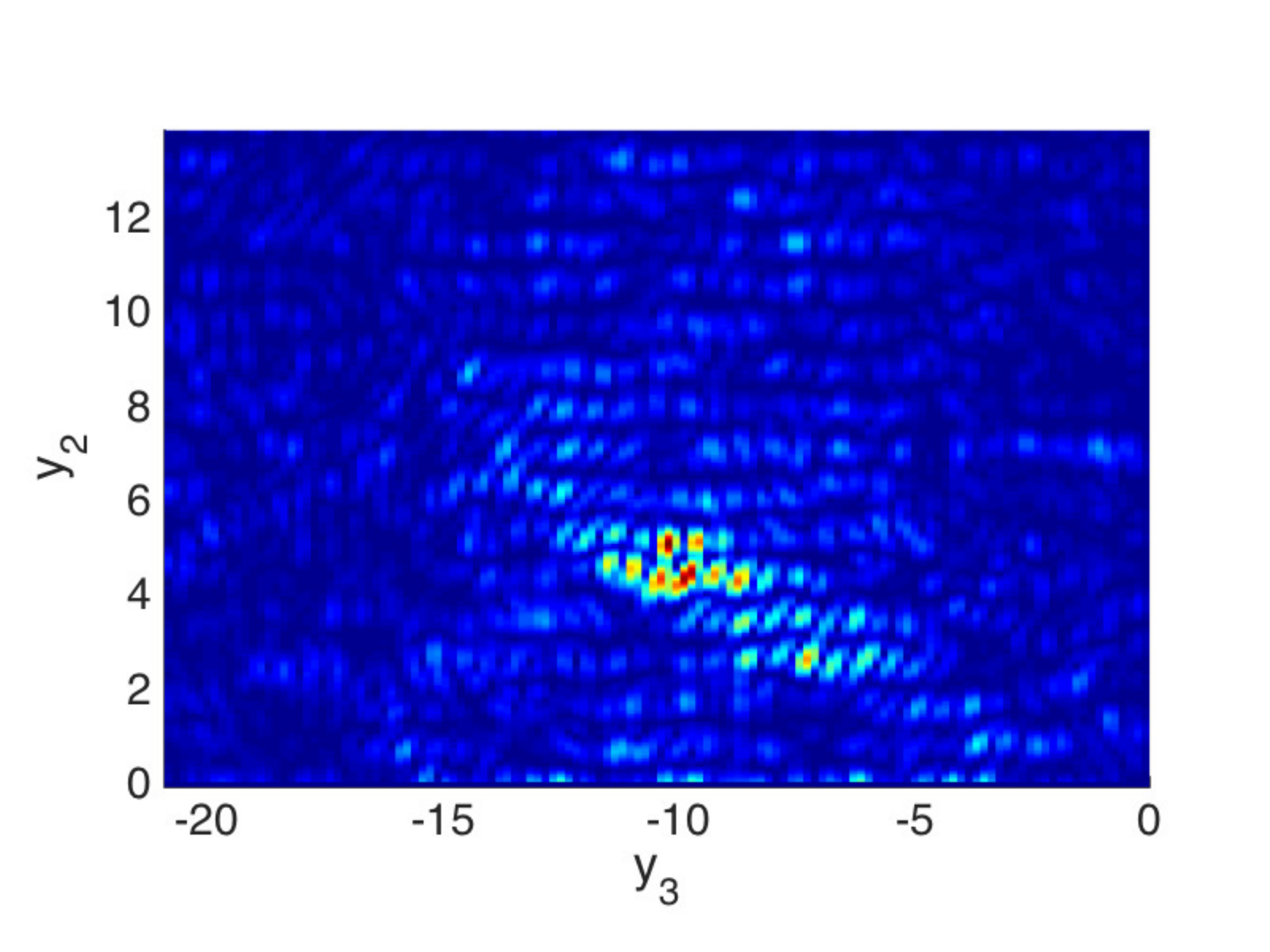}}
{\includegraphics[width=6cm]{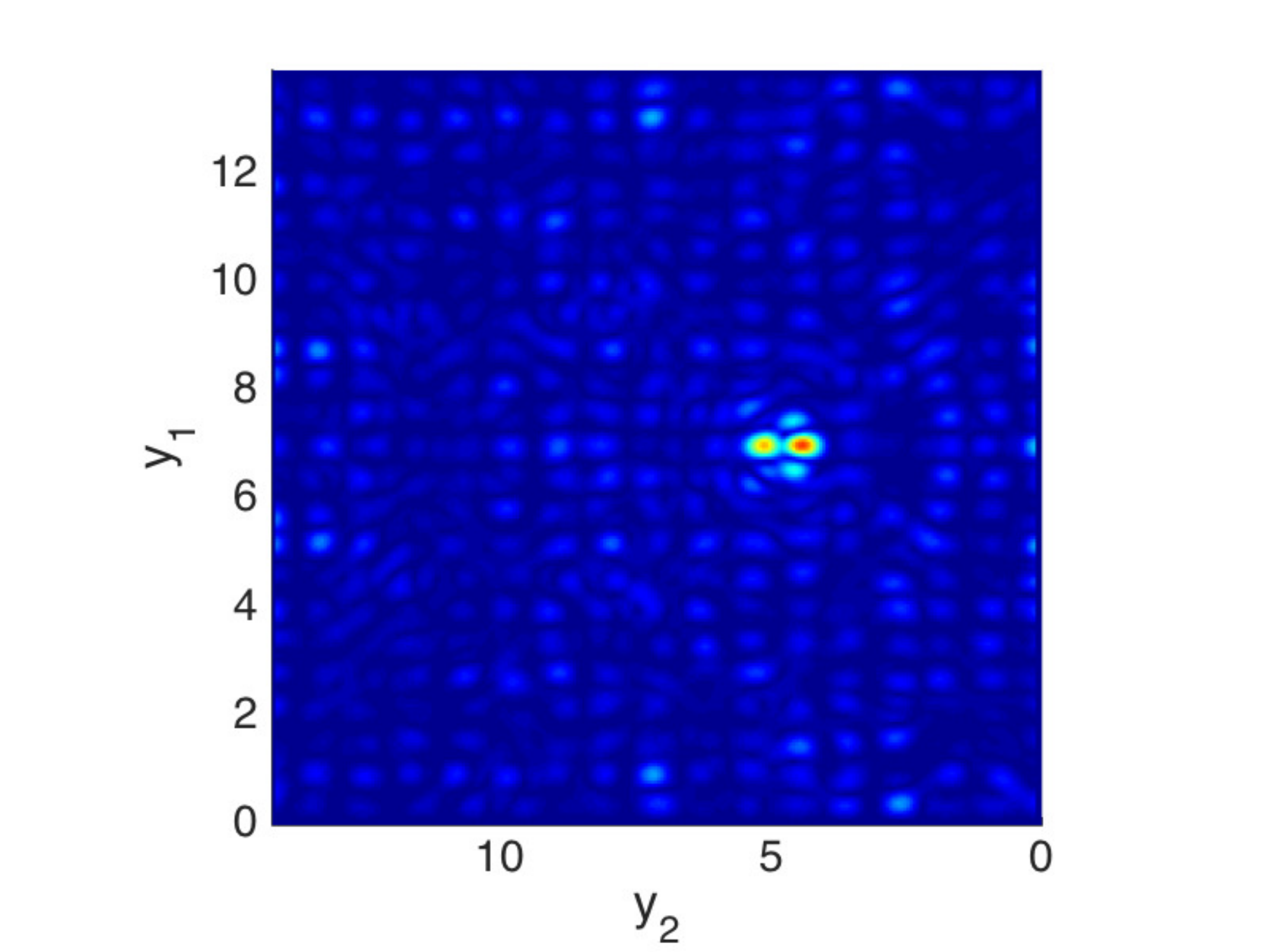}}\\
{\includegraphics[width=6cm]{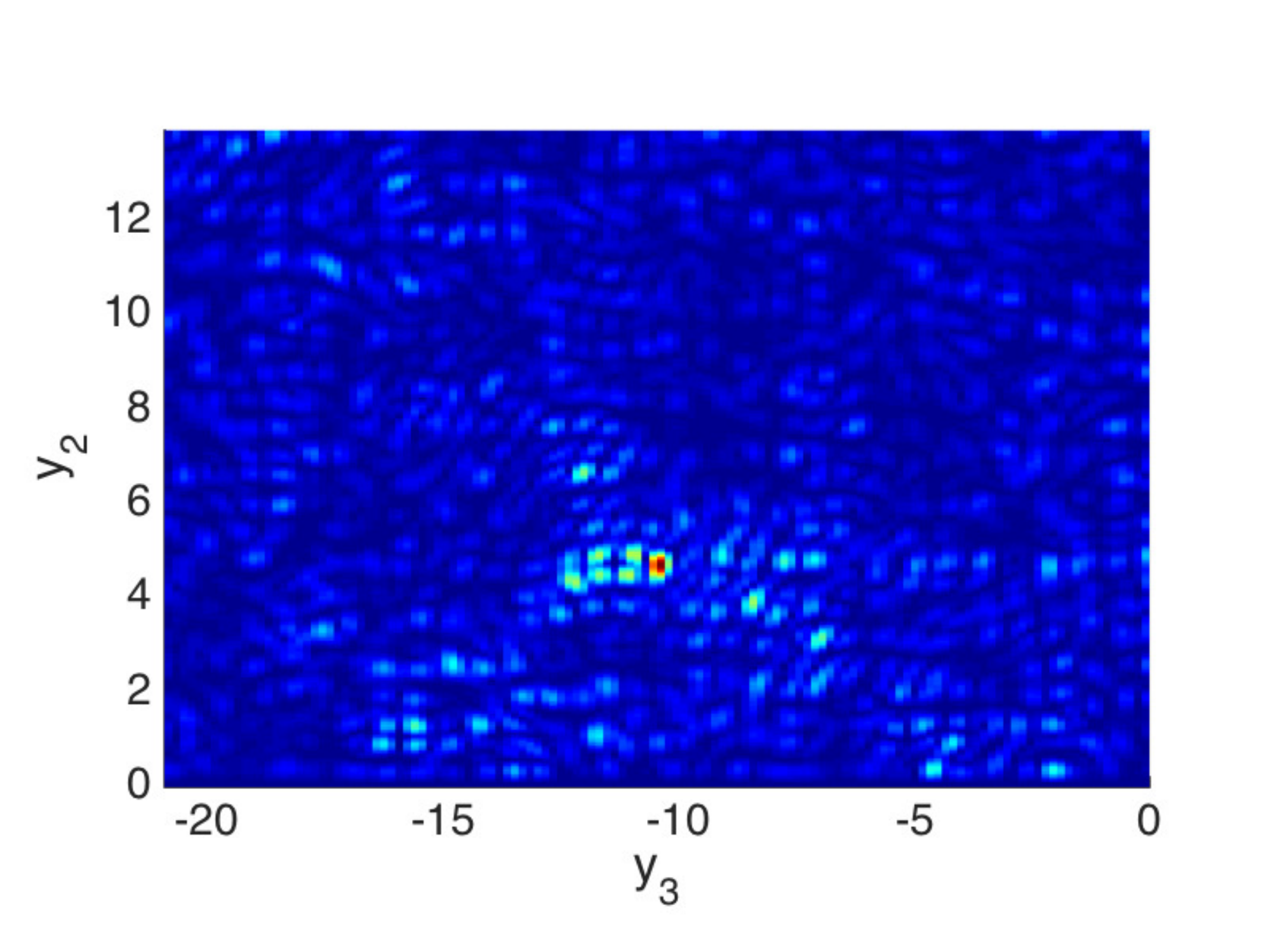}}
{\includegraphics[width=6cm]{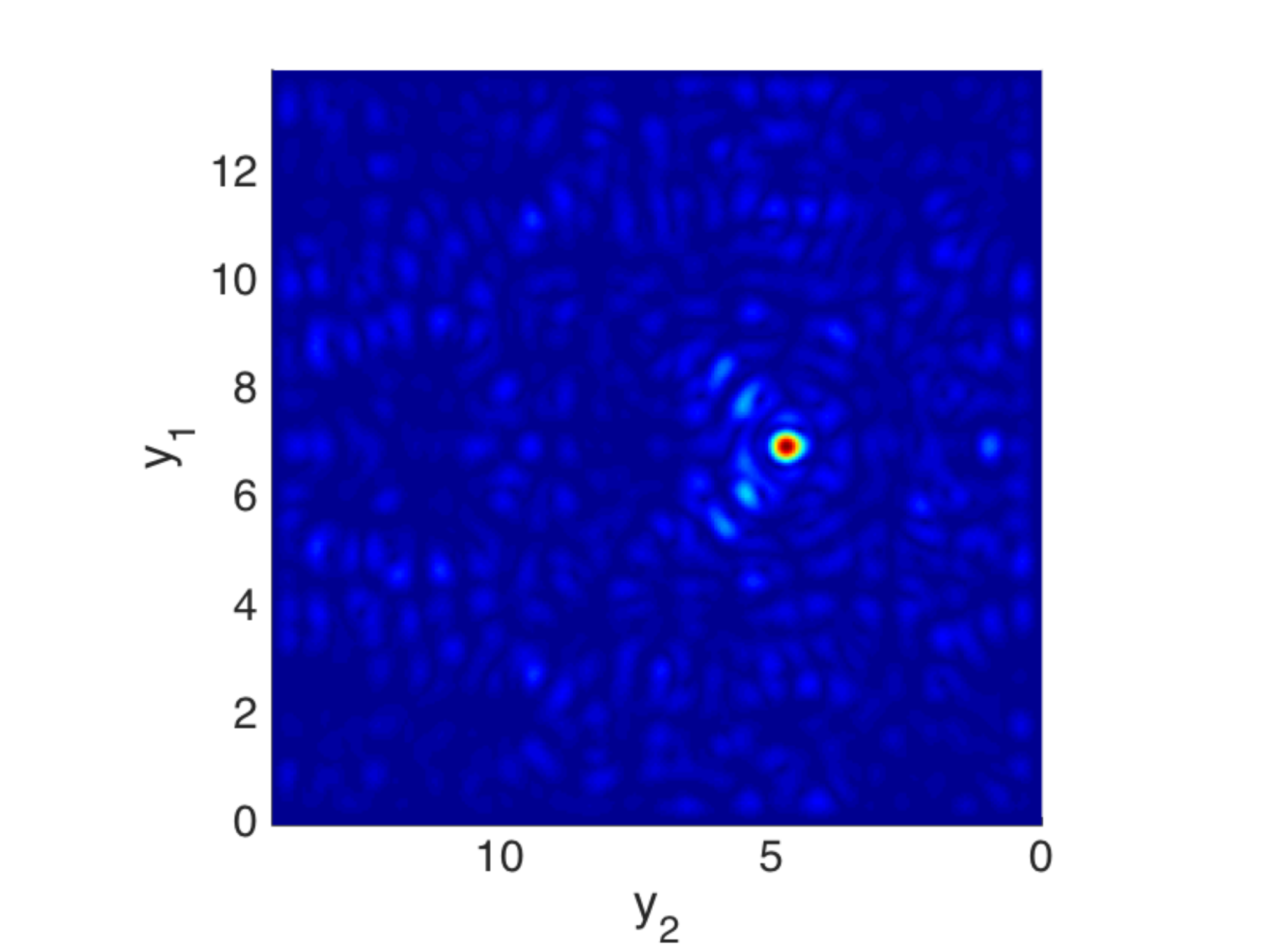}}\\
\caption{Reverse time migration images of an anisotropic point-like reflector located at $(6.95,4.73,-10.44)\lambda$, using reverse time migration. 
We use the first 350 arriving modes and 75\% aperture. We display the absolute value of (\ref{eq:TR5})
for $l = 1$ in the first line, $2$ in the second and $3$ in the third. 
The images in the left column are in the plane $y_1 = 6.95\lambda$ and in the 
right column in the plane   $y_3 = -10.44\lambda$. The axes are in units of $\lambda$.}
\label{fi:4}
\end{figure}

\section{Summary}
\label{sect:sum}
We study imaging  with electromagnetic waves in terminating waveguides,
using measurements of the electric field at an array of sensors. The goal of imaging is to 
localize compactly supported reflectors that lie between the array and the end wall. We derive  
the data model using Maxwell's equations. We define the scattered electric field 
due to an incident wave from one sensor in the array and show that it satisfies a Lipmann-Schwinger 
type equation.  We analyze the solvability of this equation and write explicitly the data model using 
a modal decomposition of the wave field in the waveguide. This model is based on the single scattering 
approximation at the unknown reflectors. We use it to formulate two imaging methods: The first 
forms an image by calculating the action of the  adjoint of the forward operator  on the data. It has a 
time reversal interpretation. The second uses $\ell_1$ i.e., sparsity enhancing optimization. We 
present numerical results with both imaging methods for point-like and extended reflectors.

 \section*{Acknowledgements}
This work was partially supported by AFOSR GrantFA9550-12-1-0117 (DLN)
and AFOSR grant FA9550-15-1-0118 (LB). LB also acknowledges support
from the Simons Foundation and ONR Grant N000141410077.

\appendix
\section{Vectorial eigenvalue problem} 
\label{sect:VEP}
\subsection{Spectral decomposition of the Laplacian}
Let $\vbf = (\bm f, f_3)^\top\in (L^2(\Omega))^3$, and consider the
linear differential operator associated with the vectorial Laplacian
problem
\begin{align}
-\Delta \vec{\bm{u}}(\bx) = \vbf(\bx) \quad &\bx \in \Omega, \nonumber 
\\ \bm{n}^\bot(\bx) \cdot \bm{u}(\bx) = \nabla\cdot \bm{u}(\bx) = 0
\quad &\bx \in \pa\Omega, \\ u_3(\bx) = 0 \quad &\bx \in \pa\Omega,
\label{eq:A1}
\end{align}
for $\vec{\bm{u}} = (\bm{u},u_3)$.  Since $\Delta \vec{\bm{u}} =
(\Delta \bm{u}, \Delta u_3)^\top $, we have two decoupled
problems. One is the standard Poisson problem for the longitudinal
component $u_3$,
\begin{align}
-\Delta u_3(\bx) &= f_3(\bx) \quad \bx \in \Omega, 
\nonumber \\
u_3(\bx) & =0 \quad \bx \in \pa \Omega,\label{eq:u3}
\end{align}
whose weak solution is in $H^1_0(\Omega)$ and satisfies
\begin{equation}
b(u_3,v) = \int_\Omega \nabla u_3(\bx)\cdot \nabla \ol{v}(\bx) \,\d
\bx = (f_3,v)_{L^2}, \quad \text{for all } v\in \bm{H}^1_{0}(\Omega),
\label{eq:weak01}
\end{equation} 
where $(\cdot,\cdot)_{L^2}$ denotes the inner product in
$L^2(\Omega)$.  The other problem is for the two dimensional
transverse vector $\bm{u}$,
\begin{align}
-\Delta \bm u(\bx) &= \bm f(\bx) \quad  \bx \in \Omega, \nonumber \\
\bm{n}^\bot(\bx) \cdot \bm{u}(\bx)  &= 0 \quad  \bx \in \pa\Omega, \nonumber \\
\nabla \cdot \bm u(\bx) &= 0 \quad  \bx \in  \pa\Omega. \label{eq:ub}
\end{align}
It is studied in \cite{Kangr1999} for a more general $\Omega$ than the
rectangle considered here. The results there establish the existence
and uniqueness of weak solutions in the space
\[
\bm{H}^1_{0t}(\Omega) = \{\bm u \in \big(H^1(\Omega)\big)^2: \bm{n}^\bot
\cdot \bm{u} =0 \text{ on } \pa\Omega \},
\]
with the standard inner $H^1$ product $(\bm u,\bm v)_{H^1}$. These
solutions satisfy the variational problem
\begin{align}
\label{eq:weak1}
a(\bm u,\bm v) = \int_\Omega (\nabla^\perp\cdot \bm u (\bx) \,
\nabla^\perp \cdot \ol{\bm v}(\bx)+ \nabla\cdot\bm u(\bx)
\,\nabla\cdot \ol{\bm v}(\bx)) \d \bx = (\bm f, \bm v)_{L^2}, 
\end{align}
for all $\bm v\in \bm{H}^1_{0t}(\Omega)$, where $\nabla^\perp$ is the
rotated gradient operator, playing the role of curl in two-dimensions,
and $(\cdot,\cdot)_{L^2}$ is the inner product in
$\big(L^2(\Omega)\big)^2$.  The results in \cite{Kangr1999} also give
a proper interpretation of $\nabla \cdot \bm u|_{\pa \Omega}$ in terms
of the curvature of the boundary. In our case the boundary is the
union of four line segments $\partial \Omega_j$, for $j = 1, \ldots,
4$, so the curvature is zero on each segment.  It is shown in
\cite{Kangr1999} that $\nabla \cdot \bm u|_{\pa \Omega}$ exists and
belongs to $H^{-1/2}(\pa \Omega_j)$ on each piece of the boundary, and
the weak solution satisfies the estimate
\begin{align}
\label{eq:estimate}
\|\bm u\|_{H^1} \leq C\|\bm f\|_{L^2}.
\end{align}

To arrive at the spectral decomposition of the vectorial Laplacian in
(\ref{eq:A1}), we study its ``inverse'' i.e., the solution operator
$\mathcal{L}: (L^2(\Omega))^3 \to (L^2(\Omega))^3$ defined by
$\mathcal{L}(\vec{\bm f}) = (\bm u, u_3)$, where $\bm u$ solves
(\ref{eq:weak1}) and $u_3$ solves (\ref{eq:weak01}). Obviously,
$\mathcal{L}$ is a linear operator.  It is also injective, bounded,
self-adjoint and compact. The injectivity follows from the uniqueness
of solutions of (\ref{eq:weak01}) and (\ref{eq:weak1}). The
boundedness and compactness follow from the estimate
(\ref{eq:estimate}) on $\bm u$ and a similar one on $u_3$, together
with the imbedding of $H_{0t}^1(\Omega)$ in $(L^2(\Omega))^2$ and of
$H_0^1(\Omega)$ in $L^2(\Omega)$. To see that $\mathcal L$ is
self-adjoint, let $\vbf$ and $\vbg$ be arbitrary in $(L^2(\Omega))^3$
and denote by $(\bm u,u_3)$ and $(\bm v,v_3)$ their image in
$\Im(\mathcal{L}) \subset (L^2(\Omega))^3$, such that
$\mathcal{L}(\vbf) = (\bm u,u_3)$ and $\mathcal{L}(\vbg) = (\bm
v,v_3)$. Then equations (\ref{eq:weak01}) and (\ref{eq:weak1}) give
 \begin{align*}
 (\mathcal{L}(\vbf),\vbg)_{L^2} &= (\bm u, \bm g)_{L^2} + (u_3,
   g_3)_{L^2} = \ol{a(\bm v, \bm u)} + \ol{b(v_3,u_3)}\\ &= a(\bm u,
   \bm v) + b(u_3,v_3) = (\bm f, \bm v)_{L^2} +(f_3,v_3)_{L^2}\\ &=
   (\vbf, \mathcal{L}(\vbg))_{L^2}.
 \end{align*}
We conclude from the spectral theorem for self-adjoint, compact operators
\cite[appendix D]{evans} that there is an orthogonal basis of
$(L^2(\Omega))^3$ consisting of the eigenfunctions $\vec{\bm u}_j$ of
$\mathcal{L}$, for eigenvalues $\gamma_j$ that tend to $0$ as $j \to
\infty$. 

The eigenvalues cannot be zero, because $\mathcal{L}$ is injective, so
we can divide by them and get
\[
\vec{\bm u}_j = \gamma_j^{-1} \mathcal{L}(\vec{\bm u}_j).
\]
Consequently, by estimate (\ref{eq:estimate}) and a similar one for
the standard problem (\ref{eq:weak01}), we obtain that $\vec{\bm u}_j
= (\bm u_j,u_{3,j}) \in \mathcal{H}$, the space of three dimensional
vectors with components $\bm u_j \in H_{0t}^1(\Omega)$ and $u_{3,j}
\in H_0^1(\Omega)$.  To finish the argument, let $\vec{v} = (\bm v,
v_3) \in \mathcal{H}$ and consider the bilinear form $A: \mathcal{H}
\times \mathcal{H} \to \mathbb{C}$ defined in the obvious way
\[
A(\vec{\bm u},\vec{\bm v}) = a(\bm u, \bm v) + b(u_3,v_3).
\]
By letting $\vec{\bm u} = \vec{\bm u}_j$ we get 
\begin{align*}
\gamma_j A(\vec{\bm u}_j,\vec{\bm v}) = A \big( \mathcal{L}(\vec{\bm
  u}_j),\vec{\bm v} \big) = ({\bm u}_j,\bm v)_{L^2} +
(u_{3_j},v_3)_{L^2} = \lin\vec{\bm u}_j,\vec{\bm v}\rin,
\end{align*}
where we used equations (\ref{eq:weak01}) and (\ref{eq:weak1}) and
recall that $\lin \cdot, \cdot \rin$ is the inner product in
$\big(L^2(\Omega)\big)^3$. This relation states that $\vec{\bm u}_j$ are weak
eigenfunctions of the vectorial Laplacian, for eigenvalues $\lambda_j
= \gamma_j^{-1}$.

Finally, the expression (\ref{eq:eigenvals}) of the eigenvalues and
(\ref{eq:eigvect1})--(\ref{eq:eigvect4}) of the eigenfunctions follow
by direct calculation i.e., the method of separation of variables. See
\cite[section 3]{alonso2015electromagnetic}.  The eigenfunctions
$\vec{\bm u}_j$ are denoted in the paper by $\vec \Phi_j^{(s)}(\bx)$,
with index $s = 1, \ldots, m_j,$ the multiplicity of the
eigenvalue $\lambda_j$.

\section{The reference field}
\label{sect:REF}
Because the eigenfunctions $\vec \Phi_j^{(s)}(\bx)$ are an orthogonal
basis, we can seek the solution $\vec{\bm E}^o$ of equations
(\ref{eq:forward4}) in the form 
\begin{align}
\label{eq:reference}
\vec{\bm{E}}^{o}(\vx) = \sum_{ j\in\N^2_0}\sum_{s=1}^{m_j}
g^{(s)}_{j}(x_3) \vec\Phi^{(s)}_j(\bx),
\end{align}
for each given $x_3< 0$. It remains to determine the coefficients
$g_{j}^{(s)}(x_3).$

We substitute (\ref{eq:reference}) in (\ref{eq:forward4}), and calculating
\begin{align}
\curl\curl\left[g^{(1)}_{n}(x_3)\Phi^{(1)}_j(\bm x)\right] &=
               [\lambda_ng^{(1)}_{n}(x_3) -
                 \pa_{x_3}g^{(1)}_{j}(x_3)]\vec\Phi^{(1)}_j(\bx),
               \nonumber \\
\label{eq:formula}
\curl\curl\left[g^{(2)}_{n}(x_3)\Phi^{(2)}_j(\bm x)\right] &=
-\pa^2_{x_3}g^{(2)}_{j}(x_3) \Phi^{(2)}_j(\bx)- \lambda_j
\pa_{x_3}g^{(2)}_{j}(x_3) \vec \Phi^{(3)}_j(\bx),
\\ \curl\curl\left[g^{(3)}_{n}(x_3)\Phi^{(2)}_j(\bm x)\right]&=
\lambda_j g^{(3)}_{j}(x_3) \Phi^{(3)}_j(\bx) + \pa_{x_3}
g^{(3)}_{j}(x_3)\Phi^{(2)}_j(\bx), \nonumber
\end{align}
we get 
\begin{align}
i \om \mu_o \vbJ(\bx) \delta(x_3+L) = \sum_{
  j\in\N^2_0}\sum_{s=1}^{m_j} \Big\{ \big[ (\lambda_j-k^2)
  g_j^{(1)}(x_3) - \pa_{x_3}^2 g_j^{(1)}(x_3) \big] \vec
\Phi_j^{(1)}(\bx) \delta_{s,1} \nonumber \\ +\big[ -(k^2 + \pa_{x_3}^2
  ) g_j^{(2)}(x_3) + \pa_{x_3}g_j^{(3)}(x_3) \big] \vec
\Phi_j^{(2)}(\bx) \delta_{s,2} \nonumber \\ + \big[ (\lambda_j - k^2)
  g_j^{(3)}(x_3) -\lambda_j \pa_{x_3}g_j^{(2)}(x_3) \big] \vec
\Phi_j^{(3)}(\bx) \delta_{s,3} \Big\}. \label{eq:NEW}
\end{align}
The equations for $g_j^{(s)}(x_3)$ follow from (\ref{eq:NEW}) and the
orthogonality of the eigenfunctions,
\begin{align*}
\pa_{x_3}^2 g^{(1)}_{j}(x_3) &= -(k^2-\lambda_j)g^{(1)}_{j}(x_3), \\
\pa_{x_3}^2 g^{(2)}_{j}(x_3) &= -(k^2-\lambda_j)g^{(2)}_{j}(x_3), 
\\ g^{(3)}_{j}(x_3) &= \frac{\lambda_j}{\lambda_j-k^2}\pa_{x_3} g^{(2)}_{j}(x_3), \quad 
x_3 \ne -L.
\end{align*}
The solution of these equations is
\begin{align}
\label{eq:g12}
g^{(s)}_{j}(x_3) = a^{\pm(s)}_{j} e^{i\beta_jx_3} + b^{\pm(s)}_{j}
e^{-i\beta_jx_3}, \quad \text{for } s=1,2, 
\end{align}
and 
\begin{align}
\label{eq:g3}
g^{(3)}_{j}(x_3) =
\frac{\lambda_j}{\lambda_j-k^2}\left[i\beta_ja^{\pm(s)}_{j}
  e^{i\beta_jx_3} -i\beta_j b^{\pm(s)}_{j} e^{-i\beta_jx_3}\right],
\end{align}
where $\pm$ stands for the right and left of source.  The amplitudes
$a_j^{\pm (s)}$ and $b^{\pm (s)}_j$ have the expression given in
(\ref{eq:ampA1})-(\ref{eq:ampB2}). They are derived from the jump
conditions at the source, 
\begin{align*}
-\left[\pa_{x_3}g^{(1)}_{j}\right]_{-L} &= \frac{i\omega\mu_o
  (\vec\Phi^{(1)}_j,\vbJ)}{\|\vec\Phi^{(1)}_j\|^2},\quad
\left[g^{(1)}_{j}\right]_{-L} =
0,\\ -\left[\pa_{x_3}g^{(2)}_{j}\right]_{-L} +
\left[g^{(3)}_{j}\right]_{-L} &=
\frac{i\omega\mu_o(\vec\Phi^{(2)}_j,\vbJ)}{\|\vec\Phi^{(2)}_j\|^2},
\\ -\lambda_j \left[g^{(2)}_{j}\right]_{-L} &=
\frac{i\omega\mu_o(\vec\Phi^{(3)}_j,\vbJ)}{\|\vec\Phi^{(3)}_j\|^2},
\end{align*}
the boundary conditions  $ \vec{\bm e}_3 \times
\vec{\bm{E}}^{o}|_{x_3 = 0} = 0$, which imply
\begin{align*}
a^{+(1)}_{j} + b^{+(1)}_{j} = 0, \\
a^{+(2)}_{j} + b^{+(2)}_{j} = 0,
\end{align*}
and the radiation conditions $ a^{-(1)}_{j} = a^{-(2)}_j = 0$ for $x_3 < -L$.

\section{Derivation of the dyadic Green's function}
\label{sect:proofLem2}
It is straightforward to check that $\mathbb{G}$ given in
\eqref{eq:TensorForm} satisfies equation (\ref{eq:Tensor2}), provided
that $\vec{G}_j$ satisfies \eqref{eq:Green2}.  To calculate
$\vec{G}_j$, we make the following observations. On $\pa\Omega$, where
$\vbn = (\bm n, 0)$,
\begin{align*}
\vbn \times \vec\Phi^{(s)}_j = -\big[(\bm
  n^\perp,0)\cdot \vec\Phi^{(s)}_j\big] \vec{\bm{e}}_3 = 0, \text{ for } s=1,2,
\text{ and } \vbn \times \vec\Phi^{(3)}_j = 0.
\end{align*}
Moreover, for a regular function $g(x_3)$ we have
\begin{align*}
\vec\nabla[\vec\nabla \cdot(g(x_3)\vec \Phi^{(1)}_j(\bm x)]  &= 0, \nonumber \\
\vec\nabla[\vec\nabla \cdot(g(x_3)\vec \Phi^{(2)}_j(\bm x)] &=
-\lambda_j g(x_3) \vec \Phi^{(2)}_j(\bm x) - \lambda_j \pa_{x_3}
g(x_3) \vec \Phi^{(3)}_j(\bm x), \\ \vec\nabla[\vec\nabla
  \cdot(g(x_3)\vec \Phi^{(3)}_j(\bm x)] &= \pa_{x_3} g(x_3) \vec
\Phi^{(2)}_j(\bm x) +\pa_{x_3}^2 g(x_3) \vec \Phi^{(3)}_j(\bm x). \nonumber
\end{align*}
These observations imply that for all $s$ and $\vx = (\bx,x_3)$, with 
$\bx \in \pa \Omega$,
\begin{align*}
\vbn(\vx) \times \vec\nabla[\vec\nabla \cdot
  \big(g(x_3)\vec\Phi^{(s)}_j(\bm x)\big)] = \vbn(\vx) \times
\big[g(x_3)\vec \Phi^{(s)}_j(\bm x)\big] = 0.
\end{align*}
This allows us to seek $ \vec{G}_j(\cdot,\vby)$ as an expansion in the
orthogonal basis $\{\vec\Phi^{(s)}_n(\bx)\}$ of eigenfunctions of the
vectorial Laplacian
\begin{equation}
\label{eq:G1}
\vec{G}_j(\vx, \vby) = \sum_{ n\in\N^2_0}\sum_{s=1}^{m_n}
g^{j(s)}_n(x_3,\vby) \vec\Phi^{(s)}_n(\bx),
\end{equation}
because each term satisfies the required boundary conditions at $\pa
\Omega$.

Substituting \eqref{eq:G1} in \eqref{eq:Green2} gives
\[
 \sum_{ n\in\N^2_0}\sum_{s=1}^{m_n}[\pa_{x_3}^2
   g^{j(s)}_n(x_3,\vby) + (k^2-\lambda_n) g^{j(s)}_n(x_3,\vby)]
 \vec\Phi^{(s)}_n(\bx) = \delta(\bx -\bm{y})\vec{\bm{e}}_j
 \delta(x_3-y_3),
\]
and using the orthogonality of the eigenfunctions we obtain the following 
ordinary differential equations
\[
\pa_{x_3}^2 g^{j(s)}_n(x_3,\vby) + (k^2-\lambda_n)
g^{j(s)}_n(x_3,\vby) = \frac{ \vec{\bm{e}}_j\cdot
  \vec\Phi^{(s)}_{n}(\bm{y})}{\|\vec\Phi^{(s)}_{n}\|^2}\delta(x_3-y_3).
\]
The solutions of these equations, which satisfy the radiation
condition at $x_3 < y_3$, are
\begin{align}
\label{eq:g}
g^j_n(x_3,\vby) = \begin{cases} (a^{(s)}_n e^{i\beta_nx_3} +
  b^{(s)}_ne^{-i\beta_nx_3})\vec{\bm{e}}_j\cdot
  \vec\Phi^{(s)}_{n}(\bm{y})/ \| \vec\Phi^{(s)}_{n}\|^2,\quad
  x_3>y_3,\\ c^{(s)}_ne^{-i\beta_nx_3} \vec{\bm{e}}_j\cdot
  \vec\Phi^{(s)}_{n}(\bm{y}) / \| \vec\Phi^{(s)}_{n}\|^2,\quad
  x_3<y_3.
\end{cases}
\end{align}
The coefficients $a^{(s)}_n, b^{(s)}_n$ and $c^{(s)}_n$ are determined
by jump conditions at $y_3$
\begin{align}
\nonumber 
g^{j(s)}_n(y_3^+,\vby) - g^{j(s)}_n(y_3^-,\vby) &= 0, \\
\label{eq:condition2}
\pa_{x_3}g^{j(s)}_n(y_3^+,\vby)- \pa_{x_3}g^{j(s)}_n(y_3^-,\vby) &=
\frac{\vec{\bm{e}}_j\cdot
  \vec\Phi^{(s)}_{n}(\bm{y})}{\|\vec\Phi^{(s)}_{n}\|^2}, 
\end{align}
and at $x_3 = 0$,
\begin{equation}
\label{eq:condition3}
\vec{\bm{e}}_3 \times (k^2+ \vec\nabla
\divv)\vec{\bm G}_j(\vx,\vby) = 0.
\end{equation}
The jump conditions (\ref{eq:condition2}) imply 
\begin{align}
\nonumber a^{(s)}_n e^{i\beta_ny_3} + b^{(s)}_n e^{-i\beta_ny_3} -
c^{(s)}_n e^{-i\beta_ny_3} =0, \\
\label{eq:eq2}
a^{(s)}_n e^{i\beta_ny_3} - b^{(s)}_n e^{-i\beta_ny_3} + c^{(s)}_n
e^{-i\beta_ny_3} = \frac{1}{i\beta_n}.
\end{align}
For the boundary condition~\eqref{eq:condition3}, we need the formulae
\begin{align*}
\vec{\bm{e}}_3 \times \vec\Phi^{(1)}_n &= \left( \begin{matrix}
  \frac{\pi n_1}{L_1} \sin \left(\frac{\pi n_1x_1}{L_1}\right) \cos
  \left(\frac{\pi n_2x_2}{L_2}\right) \\ \frac{\pi n_2}{L_2}\cos
  \left( \frac{\pi n_1x_1}{L_1}\right) \sin \left( \frac{\pi
    n_2x_2}{L_2}\right) \\ 0 \end{matrix}\right), \\ \vec{\bm{e}}_3
\times \vec\Phi^{(2)}_n &= \left( \begin{matrix} -\frac{\pi n_2}{L_2}
  \sin \left(\frac{\pi n_1x_1}{L_1}\right) \cos \left(\frac{\pi
    n_2x_2}{L_2}\right) \\ \frac{\pi n_1}{L_1}\cos \left( \frac{\pi
    n_1x_1}{L_1}\right) \sin \left( \frac{\pi n_2x_2}{L_2}\right)
  \\ 0 \end{matrix}\right), \\ \vec{\bm{e}}_3 \times \vec\Phi^{(3)}_n
&= 0,
\end{align*}
and
\begin{align*}
\vec{\bm{e}}_3 \times\vec\nabla[\vec\nabla
  \cdot(g^{j(1)}_n(x_3)\vec\Phi^{(1)}_n(\bm x)] &= 0,
\\ \vec{\bm{e}}_3 \times \vec\nabla[\vec\nabla
  \cdot(g^{j(2)}_n(x_3)\vec\Phi^{(2)}_n(\bm x)] &= -\lambda_n
g^{j(2)}_n(x_3) \left( \begin{matrix} -\frac{\pi n_2}{L_2} \sin
  \left(\frac{\pi n_1x_1}{L_1}\right) \cos \left(\frac{\pi
    n_2x_2}{L_2}\right) \\ \frac{\pi n_1}{L_1}\cos \left( \frac{\pi
    n_1x_1}{L_1}\right) \sin \left( \frac{\pi n_2x_2}{L_2}\right)
  \\ 0 \end{matrix}\right), \\ \vec{\bm{e}}_3
\times\vec\nabla[\vec\nabla \cdot(g^{j(3)}_n(x_3)\vec\Phi^{(3)}_j(\bm
  x)] &= \pa_{x_3} g^{j(3)}_n(x_3) \left( \begin{matrix} -\frac{\pi
    n_2}{L_2} \sin \left(\frac{\pi n_1x_1}{L_1}\right) \cos
  \left(\frac{\pi n_2x_2}{L_2}\right) \\ \frac{\pi n_1}{L_1}\cos
  \left( \frac{\pi n_1x_1}{L_1}\right) \sin \left( \frac{\pi
    n_2x_2}{L_2}\right) \\ 0 \end{matrix}\right).
\end{align*}
Substituting in \eqref{eq:condition3} we get 
\begin{align*}
g_n^{j(1)}(0,\vby) = 0 \text{ and } (k^2-\lambda_n)g^{j(2)}_n(0,\vby)
+\pa_{x_3}g^{j(3)}_n(0,\vby) = 0,
\end{align*}
or, equivalently,
\begin{align}
\label{eq:eq3p}
a_n^{(1)} + b_n^{(1)} = 0,
\end{align}
and
\begin{align} (k^2-\lambda_n)
\big(a_n^{(2)} + b_n^{(2)} \big) \frac{\vec{\bm e}_j \cdot \vec
  \Phi_j^{(2)}(\vby)}{\|\vec \Phi_j^{(2)}\|^2} + i \beta_n
\big(a_n^{(3)} - b_n^{(3)} \big) \frac{\vec{\bm e}_j \cdot \vec
  \Phi_j^{(3)}(\vby)}{\|\vec \Phi_j^{(3)}\|^2} &= 0.\label{eq:eq3}
\end{align}

We now have a linear system of eight equations (\ref{eq:eq2}),
(\ref{eq:eq3p}) and (\ref{eq:eq3}) for the nine unknowns $a_n^{(s)}$,
$b_n^{(s)}$ and $c_n^{(s)}$. The system is underdetermined, so
$\vec{\bm G}_j$ is not uniquely defined. However, $\mathbb{G}(\cdot,
\vby)$ given by~\eqref{eq:TensorForm} is unique, because a
straightforward computation shows that the coefficients with $s = 2$
or $3$ appear only in the combinations
\[
(b_n^{(3)} + i\beta_nb_n^{(2)}) \left( \vec\Phi^{(2)}_n +
\frac{i\lambda_n}{\beta_n} \vec\Phi^{(3)}_n(\bm x) \right)
e^{-i\beta_nx_3} \] and
\[
(a_n^{(3)} - i\beta_n a_n^{(2)}) \left(\vec\Phi^{(2)}_n -
\frac{i\lambda_n}{\beta_n} \vec\Phi^{(3)}_n(\bm x) \right)
e^{i\beta_nx_3}.
\]
Thus, we can calculate the most convenient solution of the
underdetermined system (\ref{eq:eq2}), (\ref{eq:eq3p}) and
(\ref{eq:eq3}), corresponding to $a_n^{(3)} = b_n^{(3)}$. This gives
$\pa_{x_3} g_n^{j(3)}(0) = 0.$ The expression of $\vec{\bm G}_j$ in 
Lemma \ref{lem.2} follows.

\bibliographystyle{siam}
\bibliography{ip-biblio}

\providecommand{\noopsort}[1]{}
\begin{thebibliography}{10}

\bibitem{alonso2015electromagnetic}
{\sc R.~Alonso and L.~Borcea}, {\em Electromagnetic wave propagation in random
  waveguides}, Multiscale Modeling \& Simulation, 13 (2015), pp.~847--889.

\bibitem{Arens2011}
{\sc T.~Arens, D.~Gintides, and A.~Lechleiter}, {\em Direct and inverse medium
  scattering problems in a planar 3{D} waveguide}, SIAM J. Appl. Math., 71
  (2011), pp.~753--72.

\bibitem{borcea2014paraxial}
{\sc L.~Borcea and J.~Garnier}, {\em Paraxial coupling of propagating modes in
  three-dimensional waveguides with random boundaries}, Multiscale Modeling \&
  Simulation, 12 (2014), pp.~832--878.

\bibitem{Borce2010}
{\sc L.~Borcea, L.~Issa, and C.~Tsogka}, {\em Source localization in random
  acoustic waveguides}, Multiscale Model. Simul., 8 (2010), pp.~1981--2022.

\bibitem{Bourg2011}
{\sc L.~Bourgeois, F.~L. Lou{\"e}r, and E.~Lun{\'e}ville}, {\em On the use of
  {L}amb modes in the linear sampling method for elastic waveguides}, Inverse
  Problems, 27 (2011), p.~055001.

\bibitem{Bourg2008}
{\sc L.~Bourgeois and E.~Lun{\'e}ville}, {\em The linear sampling method in a
  waveguide: a modal formulation}, Inverse Problems, 24 (2008), p.~015018.

\bibitem{Bourg2012}
\leavevmode\vrule height 2pt depth -1.6pt width 23pt, {\em On the use of
  sampling methods to identify cracks in acoustic waveguides}, Inverse
  Problems, 28 (2012), p.~105011.

\bibitem{Bourg2013}
\leavevmode\vrule height 2pt depth -1.6pt width 23pt, {\em On the use of the
  linear sampling method to identify cracks in elastic waveguides}, Inverse
  Problems, 29 (2013), p.~025017.

\bibitem{Dediu2006}
{\sc S.~Dediu and J.~R. McLaughlin}, {\em Recovering inhomogeneities in a
  waveguide using eigensystem decomposition}, Inverse Problems, 22 (2006),
  pp.~1227--1246.

\bibitem{evans}
{\sc L.~Evans}, {\em Partial Differential Equations (Graduate Studies in
  Mathematics vol 19)(Providence, RI: American Mathematical Society)}, Oxford
  University Press, 1998.

\bibitem{Issa2010}
{\sc L.~Issa}, {\em Source localization in cluttered acoustic waveguides}, PhD
  thesis, Rice University, 2010.

\bibitem{jackson}
{\sc J.~D. Jackson}, {\em Classical electrodynamics}, Wiley New York etc.,
  2~ed., 1975.

\bibitem{Jorda1996}
{\sc A.~K. Jordan and L.~S. Tamil}, {\em Inverse scattering theory for optical
  waveguides and devices: {S}ynthesis from rational and nonrational reflection
  coefficients}, Radio Science, 31 (1996), pp.~1863--1876.

\bibitem{Kangr1999}
{\sc U.~Kangro and R.~Nicolaides}, {\em Divergence boundary conditions for
  vector helmholtz equations with divergence constraints}, ESAIM, Math. Model.
  Numer. Anal., 33 (1999), pp.~479--492.

\bibitem{Kirsc2007a}
{\sc A.~Kirsch}, {\em An integral equation approach and the interior
  transmission problem for {M}axwell's equations}, Inverse Probl. Imaging, 1
  (2007), pp.~159--180.

\bibitem{McLea2000}
{\sc W.~McLean}, {\em Strongly Elliptic Systems and Boundary Integral
  Operators}, Cambridge University Press, Cambridge, UK, 2000.

\bibitem{Mills1992}
{\sc D.~W. Mills and L.~S. Tamil}, {\em Analysis of planar optical waveguides
  using scattering data}, J. Opt. Soc. Am. A, 9 (1992), pp.~1769--1778.

\bibitem{Monk2003a}
{\sc P.~Monk}, {\em Finite Element Methods for {M}axwell's Equations}, Oxford
  Science Publications, Oxford, 2003.

\bibitem{Monk2012}
{\sc P.~Monk and V.~Selgas}, {\em Sampling type methods for an inverse
  waveguide problem}, Inverse Probl. Imaging, 6 (2012), pp.~709--747.

\bibitem{Roux2000}
{\sc P.~Roux and M.~Fink}, {\em Time reversal in a waveguide: {S}tudy of the
  temporal and spatial focusing}, J. Acoust. Soc. Am., 107 (2000),
  pp.~2418--29.

\bibitem{sabra2004blind}
{\sc K.~G. Sabra and D.~R. Dowling}, {\em Blind deconvolution in ocean
  waveguides using artificial time reversal}, The Journal of the Acoustical
  Society of America, 116 (2004), pp.~262--271.

\bibitem{tamil1991spectral}
{\sc L.~S. Tamil and A.~K. Jordan}, {\em Spectral inverse scattering theory for
  inhomogeneous dielectric waveguides and devices}, Proceedings of the IEEE, 79
  (1991), pp.~1519--1528.

\bibitem{tsogka2013selective}
{\sc C.~Tsogka, D.~A. Mitsoudis, and S.~Papadimitropoulos}, {\em Selective
  imaging of extended reflectors in two-dimensional waveguides}, SIAM Journal
  on Imaging Sciences, 6 (2013), pp.~2714--2739.

\bibitem{Xu2000}
{\sc Y.~Xu, C.~Matawa, and W.~Lin}, {\em Generalized dual space indicator
  method for underwater imaging}, Inverse Problems, 16 (2000), pp.~1761--1776.

\end{thebibliography}
\end{document}